\newtheorem{theorem}{Theorem}[section]
\newtheorem{corollary}[theorem]{Corollary}
\newtheorem*{main*}{Main Theorem}
\newtheorem{lemma}[theorem]{Lemma}
\newtheorem{proposition}[theorem]{Proposition}
\theoremstyle{definition}
\newtheorem{definition}[theorem]{Definition}
\newtheorem{example}[theorem]{Example}
\title[Running heading with forty characters or less]
      {On non-dense orbits of certain non-algebraic dynamical systems}
\author[first-name1 last-name1 and first-name2 last-name2]{Weisheng Wu}
\subjclass{}
 \keywords{Non-dense orbits; Schmidt games; Hausdorff dimension; Anosov diffeomorphism; Expanding endomorphism}
\address{School of Mathematical Sciences, Peking University, Beijing, 100871, China}
 \email{wuweisheng@math.pku.edu.cn}
\begin{document}
\maketitle
\markboth{On non-dense orbits of certain non-algebraic dynamical systems}
{On non-dense orbits of certain non-algebraic dynamical systems}
\renewcommand{\sectionmark}[1]{}

\begin{abstract}
In this paper, we manage to apply Schmidt games to certain non-algebraic dynamical systems. More precisely, we show that the set of points with non-dense forward orbit under a $C^2$-Anosov diffeomorphism with conformality on unstable manifolds is a winning set for Schmidt games. It is also proved that for a $C^{1+\theta}$-expanding endomorphism the set of points with non-dense forward orbit is a winning set for certain variants of Schmidt games.
\end{abstract}

\section{Introduction}
Let $M$ be an $n$-dimensional smooth and compact Riemannian manifold without boundary, and $f: M \rightarrow M$ be a $C^2$-diffeomorphism or endomorphism. In this paper, we will study the set of points with non-dense forward orbit under $f$. We call it the non-dense set of $f$ and denote it by $ND(f)$. If $f$ preserves an ergodic measure of full support, then $ND(f)$ has measure zero. But in many dynamical systems with hyperbolic behavior, this exceptional set is large in Hausdorff dimension. It was proved in \cite{U} that for a $C^2$-expanding endomorphism or a transitive $C^2$-Anosov diffeomorphism the non-dense set has full Hausdorff dimension equal to the dimension of the underlying manifold.

An effective tool for proving full Hausdorff dimension is Schmidt games, which were first introduced by W. M. Schmidt in \cite{S}. A winning set for such games is large in the following sense: it is dense in the metric space, and its intersection with any nonempty open subset has full Hausdorff dimension when the metric space is $\mathbb{R}^n$ or a manifold. Moreover, the winning property is stable with respect to countable intersections. Schmidt proved in \cite{S} that the set of badly approximable numbers is winning for Schmidt games and hence has full Hausdorff dimension $1$. There are many applications of Schmidt games in homogeneous dynamics due to the well known connection between Diophantine approximation and bounded orbits of nonquasiunipotent flows on homogeneous spaces (see \cite{S2}, \cite{D1}, \cite{D2}, \cite{AL}, \cite{CCM}, \cite{KM}, \cite{Dol}, \cite{KW1}, \cite{KW2}, \cite{DS}, \cite{AN} and many others). Motivated by this, a remarkable result was proved that for any toral endomorphism on $\mathbb{T}^n$ the non-dense set is winning for Schmidt games (cf. \cite{D} and \cite{BFK}). So far, most of known dynamical systems that have winning non-dense set are of algebraic character.

It is natural to ask which non-algebraic dynamical systems have winning non-dense sets. In \cite{U}, it is mentioned that the theory of Schmidt games is specific for the algebraic case and rather inapplicable to general Anosov diffeomorphisms. Recently, J. Tseng proved that for a $C^2$-expanding endomorphism on the circle the non-dense set is a winning set for Schmidt games and asked a question of whether there are non-algebraic dynamical systems with winning non-dense set in dimensions greater than one (cf. \cite{tseng}). Later he answered this question and proved that certain Anosov diffeomorphism on the $2$-torus has a winning non-dense set, by using the $C^1$ conjugacy between such system and a linear hyperbolic automorphism on the $2$-torus (cf. \cite{tseng1}). The problem of which non-algebraic dynamical systems in dimensions greater than one have non-dense sets winning for Schmidt games remains widely open.

\subsection{Anosov diffeomorphisms with conformality on unstable manifolds}
In this paper, we provide a class of non-algebraic dynamical systems in dimensions greater than one whose non-dense sets are winning for Schmidt games. Let $f: M\to M$ be a diffeomorphism (or an endomorphism). Fix $y\in M$ and define
\begin{equation*}
E(f, y):= \{ z\in M: y\notin \overline{\{f^k(z), k \in \mathbb{N}\}}\}.
\end{equation*}
By definition, any point in $E(f, y)$ has a non-dense forward orbit in $M$, namely $E(f,y) \subset ND(f)$. Our main result is the following:
\begin{theorem}\label{main1}
Let $f: M\to M$ be a $C^2$-Anosov diffeomorphism. Suppose that $f$ is conformal on unstable manifolds, i.e., for each $x\in M$, the derivative map $D_xf|_{E_x^u}$ is a scalar multiple of an isometry. Then $E(f,y)$ is a winning set for Schmidt games played on $M$.
\end{theorem}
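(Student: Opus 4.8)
The plan is to play the Schmidt game directly on $M$ in adapted local coordinates and have Alice dodge, at each scale, the preimages under $f$ of a small ball around $y$. Fix hyperbolicity rates $\lambda>1>\mu$ for an adapted metric, and recall that $z\in E(f,y)$ precisely when $z\notin\bigcup_{k\ge 0}f^{-k}(B(y,\varepsilon))$ for some $\varepsilon>0$. After Bob's opening move $B_0$ of radius $\rho_0$, Alice commits to a value $\varepsilon=\varepsilon(\rho_0)$ and aims for the single target set $E_\varepsilon:=\{z:\,f^k(z)\notin B(y,\varepsilon)\ \forall k\ge 0\}\subset E(f,y)$; since a superset of a winning set is winning, it suffices to steer the limit point into $E_\varepsilon$, and the strategy is allowed to depend on $\rho_0$. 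The key geometric observation is that $f^{-k}(B(y,\varepsilon))$, read in a chart with local product structure, is a thin tubular neighborhood of a single local stable plaque: since $f$ is a diffeomorphism its preimage of a ball is one connected set, and applying $Df^{-k}$ contracts the unstable directions by $\approx\lambda^{-k}$ while expanding the stable directions, so the bad set is extended along the stable foliation but has transverse (unstable) thickness only $\approx\lambda^{-k}\varepsilon$.

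This is exactly where conformality on unstable manifolds enters, and it is the crux of the argument. Because $D_xf|_{E^u_x}$ is a scalar multiple of an isometry, the cocycle $Df^{-k}|_{E^u}$ is conformal, so its contraction is \emph{isotropic}: the preimage slab has uniform thickness $\approx\lambda^{-k}\varepsilon$ in \emph{every} unstable direction, rather than being a highly anisotropic sliver that could span a full unstable subspace at the relevant scale. Thus, locally, the bad set is a uniformly thin neighborhood of an embedded stable plaque of codimension $\dim E^u$, and by bounded distortion (available from the $C^2$ hypothesis together with conformality) both its width and the near-flatness of the plaque are controlled uniformly in $x$ and $k$.

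With this in hand the game analysis is the standard scale-matching dodge. I would arrange the parameters so that at the stage where Bob's ball $B_m$ has radius $\rho_m\asymp(\alpha\beta)^m\rho_0$, the iterates $k$ whose slab thickness $\lambda^{-k}\varepsilon$ has just fallen below a fixed fraction of $\rho_m$ become ``active''; the monotone matching of $k\mapsto\lambda^{-k}\varepsilon$ against $m\mapsto\rho_m$ makes a bounded number $N_0$ of $k$ newly active per stage, and since $\rho_m$ lies far below the chart scale each $f^{-k}(B(y,\varepsilon))$ contributes only a single nearly flat plaque (a graph over the stable directions) inside $B_m$. Dodging a uniformly thin neighborhood of such a plaque is a single Schmidt move: Alice displaces her ball of radius $\alpha\rho_m$ in the transverse unstable direction, which is possible whenever the slab width is below $2(1-2\alpha)\rho_m$, and the finitely many slabs present at one stage are avoided simultaneously by one ball once $\alpha$ is taken small depending on $N_0$. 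Iterating, the nested balls converge to a point missing every $f^{-k}(B(y,\varepsilon))$, hence lying in $E_\varepsilon\subset E(f,y)$; the winning parameter $\alpha$ depends only on $\lambda$, $\mu$, the distortion constants and $\dim M$, and the construction works for all small $\beta$, proving $\alpha$-winning.

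I expect the main obstacle to be precisely the geometric control in the second paragraph: establishing that the preimages are uniformly thin, nearly flat stable slabs with a bounded number of local sheets per chart, with all constants independent of the scale. The non-affineness of the stable foliation and the distortion of $f^{-k}$ are the enemies here, and it is the unstable conformality---forcing isotropic contraction and, via bounded distortion, uniform transverse width---that tames them and makes the single-move dodge legitimate at every scale.
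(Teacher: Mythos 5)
Your geometric picture is the same as the paper's: the sets to be dodged are the preimages $f^{-k}$ of a small product neighborhood of $y$, each of which meets the current ball in (at most) one thin, nearly flat neighborhood of a stable plaque, and unstable conformality plus bounded distortion (the paper's Lemma \ref{BDconformal}) make the transverse width isotropic and uniformly controlled, so that after projecting along $W^s$ to the unstable leaf each component sits inside a ball of controlled radius. The single-sheet claim you flag as an obstacle is genuinely needed and is what the paper proves as Lemma \ref{onecomponent1}, using two scales $c\ll c'$ and the local product structure; your remark that ``the preimage of a ball is one connected set'' does not by itself bound the number of components of $f^{-k}(B(y,\varepsilon))\cap B_m$, since the preimage is a long tube that can re-enter a small ball many times.

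The genuine gap is in the game combinatorics of your third paragraph. The number of $k$ that become active per Bob move is of order $\log\bigl(1/(\alpha\beta)\bigr)/\log\sigma_1$, which is unbounded as $\beta\to 0$, whereas a single Alice ball of radius $\alpha\rho_m$ inside $B_m$ can simultaneously avoid at most on the order of $\alpha^{-\dim E^u}$ such slabs (already in the case $\dim E^u=1$, $N_0$ parallel intervals fattened by $\alpha\rho_m$ cover the whole interval once $N_0\gtrsim 1/\alpha$). So the ``single-move simultaneous dodge'' fails for small $\beta$, and your proposed remedy --- take $\alpha$ small depending on $N_0$ --- is circular: $N_0$ depends on $\beta$ (and on $\alpha$ itself), while $\alpha$-winning requires $\alpha$ to be fixed before $\beta$. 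This is exactly the point the paper's proof is built around: it groups the play into blocks of $r=r(\beta)$ turns, shows via bounded distortion (Lemma \ref{manyk1}) that at most $N\asymp r\log\bigl(1/(\alpha\beta)\bigr)/\log\sigma_1$ components are active per block, and uses the measure-theoretic avoidance Lemma \ref{Ne} to remove a fixed fraction $\epsilon$ of them at each of the $r$ turns, with $\epsilon$ depending only on the Federer constants of the volume; since $N$ grows only linearly in $r$ while $(1-\epsilon)^r$ decays exponentially, one can choose $r$ with $(1-\epsilon)^rN<1$ and dispose of all active components within the block. Without this (or an equivalent) device your argument only yields $(\alpha,\beta)$-winning for $\beta$ bounded away from $0$, which is not the winning property claimed in the theorem.
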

One will see that conformality of $f$ in $E^u$ plays a crucial role in Theorem \ref{main1}. Indeed, we will develop the geometric method in \cite{Wu} which was inspired by \cite{BFK}, to give a winning strategy for Schmidt games. The existence of invariant foliation $W^s$ guarantees that the preimages (under $f$) of certain open set that we try to avoid are "uniformly displayed" inside the balls in Schmidt games. Thus we can avoid these preimages by avoiding them in expanding direction $E^u$, which in turn is possible due to conformality by the method in \cite{Wu}.

\subsection{Expanding endomorphisms}
We also study the non-dense set of a $C^{1+\theta}$-expanding endomorphism $f$ which in general is non-algebraic. In dimensions greater than one, we don't know whether $ND(f)$ is winning for Schmidt games in general. But utilizing certain variants of Schmidt games, we can obtain some nice properties of $ND(f)$. The first variant of Schmidt games we consider here is absolute games (cf. \cite{Mc1}, \cite{BFKRW}, \cite{FSU}, etc.). The following theorem generalizes Tseng's result for $C^2$-expanding endomorphisms on the circle (cf. \cite{tseng}):
\begin{theorem}\label{main2}
Let $f: M\to M$ be a $C^{1+\theta}$-expanding endomorphism. Suppose that $f$ is conformal, i.e., for each $x\in M$, the derivative map $D_xf$ is a scalar multiple of an isometry. Then $E(f,y)$ is absolute winning.
\end{theorem}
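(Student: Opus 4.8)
The plan is to prove Theorem~\ref{main2} by constructing an explicit winning strategy for Bob in the absolute game played on $M$, avoiding a neighborhood of the target orbit $\{f^k(z) : k \in \mathbb{N}\}$ of $y$, or rather avoiding the preimages under $f$ of a fixed small ball around $y$. Recall that in the absolute game, Alice removes a small ball (of radius at most $\beta$ times the current radius) and Bob must respond by choosing a ball disjoint from the removed one; unlike the ordinary Schmidt game, here the expanding (rather than contracting) nature of $f$ is exactly what we exploit. First I would fix a small ball $B(y,r_0)$ whose complement $z$-points we wish to land in for every iterate: the key geometric observation is that since $f$ is a \emph{conformal expanding endomorphism}, each local inverse branch of $f^k$ maps $B(y,r_0)$ to a set of small diameter, comparable to $\lambda^{-k} r_0$ where $\lambda > 1$ is the expansion constant, and because of conformality these preimage sets are \emph{round} up to bounded distortion. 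Thus the ``forbidden'' sets $f^{-k}(B(y,r_0))$ form a countable collection of approximately-round blobs whose diameters shrink geometrically.

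The heart of the argument is to organize Bob's moves so that at a carefully chosen sequence of stages he eliminates all forbidden blobs of a given scale. Concretely, I would match the scale of the game (Bob's current radius $\rho_j$, which shrinks like $(\alpha\beta)^j$ after $j$ rounds) against the preimage scale $\lambda^{-k} r_0$: at the round where the game's radius first drops below the diameter of the level-$k$ preimage blobs, Bob uses his freedom to place a ball avoiding the relevant forbidden blob. Because absolute games grant Bob the power to choose \emph{any} ball disjoint from Alice's removed ball (rather than a concentric shrinking one), and because the forbidden blobs are round and small, Bob can always step around a single blob of comparable-or-larger size at the current scale; the bounded distortion from conformality is what guarantees the blobs are genuinely round and hence ``dodgeable'' with a definite margin. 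One must check that within a bounded number of rounds per scale $k$, all finitely many level-$k$ preimage blobs that could threaten Bob's shrinking nested balls are cleared, so that the single limiting point $\bigcap_j B_j$ lies outside every $f^{-k}(B(y,r_0))$, hence has orbit avoiding $B(y,r_0)$ and so lies in $E(f,y)$.

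The main obstacle I anticipate is the \emph{distortion control and the counting of blobs at each scale}. For a linear toral endomorphism the preimages are exact translates of a fixed lattice pattern, perfectly round and uniformly spaced, so dodging is transparent; here $f$ is only $C^{1+\theta}$ and conformal, so I would need to invoke a bounded-distortion estimate (Hölder control of $\log \|D f\|$ along inverse branches, summing the $\theta$-Hölder variations geometrically) to ensure that iterated inverse branches distort round balls by at most a fixed factor independent of $k$. This uniform distortion bound, standard for conformal expanding maps, is precisely what replaces the algebraic rigidity of the toral case. A secondary subtlety is that an endomorphism is generally non-invertible, so $f^{-k}(B(y,r_0))$ consists of $(\deg f)^k$ branches; I must confirm that although the number of blobs grows, their total measure and individual diameters shrink fast enough that at each game scale only a controlled number of them are large enough to matter, and Bob can sidestep each in turn.

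Finally, I would record that absolute winning is inherited under the natural bi-Lipschitz identifications and is countably-intersection-stable, so after establishing the strategy for a single target ball one concludes $E(f,y)$ is absolute winning as stated; the argument is genuinely simpler than the Anosov case of Theorem~\ref{main1} because there is no stable foliation to manage and the expanding conformal dynamics directly supplies round, geometrically shrinking forbidden sets tailor-made for the absolute game.
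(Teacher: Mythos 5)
Your geometric picture of the forbidden sets (round preimage components of a small ball $B(y,c)$ under inverse branches of $f^k$, with bounded distortion from conformality controlling their shape and the count of how many matter at each scale) agrees with the paper's. But there is a genuine structural error: you have the roles of the two players reversed, and as a result the strategy you describe is not a legal strategy for certifying absolute winning. In the absolute game, $E(f,y)$ is absolute winning if \emph{Alice} has a winning strategy, and Alice's only move is to \emph{remove} an $\epsilon_i$-neighborhood of a point with $\epsilon_i\le\beta\rho_i$; it is Bob, the adversary, who places the nested balls and who may, if permitted, steer them straight into a forbidden component. ``Stepping around a blob by placing a ball disjoint from it'' is the mechanic of the ordinary Schmidt game (used in Theorem~\ref{main1}), not of the absolute game. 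Here Alice must instead \emph{cover} each dangerous preimage component by a removed neighborhood, forcing Bob's balls away from it; this requires the components to be small relative to $\beta\rho_i$, and one must cope with the fact that several components (for several values of $k$) can be relevant at the same scale while the plain absolute game lets Alice remove only one neighborhood per turn.

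The paper resolves this by passing to the $0$-dimensional potential game (Lemma~\ref{potential}), in which Alice may remove a countable family of neighborhoods per turn subject to $\sum_j \rho_{i,j}^\gamma\le(\beta\rho_i)^\gamma$, and then proving two counting facts via bounded distortion: for each $k$ at most one component of $f^{-k}(B(y,c))$ of the relevant size meets Bob's current ball (Lemma~\ref{onek}), and at most $N$ values of $k$ produce components whose diameter lies in the window $[\rho_{jr+1}\beta^{2r},\rho_{jr+1}\beta^{r})$ (Lemma~\ref{manyk}); choosing $r$ with $N\beta^{(r-1)\gamma}\le 1$ makes Alice's removal legal. Your proposal contains neither the reduction to the potential game nor any substitute scheduling argument that would let Alice excise all $N$ relevant components within a block of turns, so as written it does not yield a proof. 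A lesser point: the claim that level-$k$ components have diameter comparable to $\lambda^{-k}r_0$ for a single expansion constant $\lambda$ is too strong when $\|D_zf\|$ varies between $\sigma_1$ and $\sigma_2$; the statement actually needed, and the one bounded distortion delivers, is that only a uniformly bounded number of values of $k$ produce components in each multiplicative diameter window.
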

Absolute winning implies winning for Schmidt games. Many properties are enjoyed by absolute winning sets that are not true for winning sets. For $Y \subset M$, we define
\begin{equation*}
E(f, Y):= \{ z\in M: Y \cap \overline{\{f^k(z), k \in \mathbb{N}\}}=\emptyset\}.
\end{equation*}
The following is an immediate consequence of absolute winning property.
\begin{corollary}\label{coro2}
Let  $\{f_i\}_{i=1}^\infty$ be a countable set of $C^{1+\theta}$-expanding endomorphisms which are conformal on $M$, and $Y$ be a countable subset of $M$. Then $\cap_{i=1}^\infty E(f_i,Y)$ is absolute winning.
\end{corollary}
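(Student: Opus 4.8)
The plan is to reduce the statement to a countable intersection of the sets handled by Theorem \ref{main2} and then invoke the countable-intersection stability enjoyed by absolute winning sets. First I would observe that for any fixed endomorphism $f$ and any subset $Y \subset M$ there is a decomposition
\begin{equation*}
E(f, Y) = \bigcap_{y \in Y} E(f, y),
\end{equation*}
since a point $z$ lies in $E(f,Y)$ precisely when no point of $Y$ belongs to $\overline{\{f^k(z) : k \in \mathbb{N}\}}$, i.e., precisely when $z \in E(f,y)$ for every $y \in Y$. When $Y$ is countable this exhibits $E(f,Y)$ as a countable intersection of sets of the form $E(f,y)$.

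Combining this decomposition with the outer intersection over the family $\{f_i\}$, I would write
\begin{equation*}
\bigcap_{i=1}^\infty E(f_i, Y) = \bigcap_{i=1}^\infty \bigcap_{y \in Y} E(f_i, y).
\end{equation*}
Because both the index set $\{1, 2, \dots\}$ and $Y$ are countable, the right-hand side is a countable intersection of sets $E(f_i, y)$, where each $f_i$ is a $C^{1+\theta}$-expanding conformal endomorphism and each $y$ ranges over $M$. By Theorem \ref{main2}, every such set $E(f_i, y)$ is absolute winning, so the problem is reduced to a statement about a countable family of absolute winning sets.

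The only substantive step is then to invoke the stability of the absolute winning property under countable intersections: a countable intersection of absolute winning sets is again absolute winning. This is precisely the feature of absolute games that distinguishes them from ordinary Schmidt games (for which the winning constant may degrade under intersection), and it is the reason Theorem \ref{main2} was phrased in the absolute category; I expect this stability to be the key ingredient rather than a genuine obstacle, as it follows from the standard analysis of absolute games. Applying it to the countable collection $\{E(f_i, y) : i \geq 1,\ y \in Y\}$ shows that $\bigcap_{i=1}^\infty E(f_i, Y)$ is absolute winning, which completes the proof.
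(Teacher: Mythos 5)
Your proposal is correct and follows exactly the paper's route: the paper's proof is a one-line citation of Theorem \ref{main2} together with part (2) of Proposition \ref{winningproperty} (countable intersections of absolute winning sets are absolute winning), and your decomposition $\bigcap_i E(f_i,Y)=\bigcap_i\bigcap_{y\in Y}E(f_i,y)$ just makes that reduction explicit. The intersection-stability you invoke is precisely the quoted Proposition \ref{winningproperty}(2), so there is no gap.
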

In Question $1$ at the end of \cite{tseng}, J. Tseng asked the question that whether $E(f,y)$ is $\alpha$-winning for Schmidt games, for some $\alpha$ independent of the choice of Markov partition and of $f$ itself. In particular, Theorem \ref{main2} answers this question, and implies that $E(f,y)$ is in fact $1/2$-winning (see Proposition \ref{winningproperty} below). The proof of Theorem \ref{main2} is a nontrivial improvement of the geometric method in \cite{Wu} to the setting of conformal $C^{1+\theta}$-expanding endomorphisms.

The second variant of Schmidt games which we would like to consider is called modified Schmidt games (cf. \cite{Wu2}). We prove the following result for a general $C^{1+\theta}$-expanding endomorphism:
\begin{theorem}\label{main3}
Let $f: M\to M$ be a $C^{1+\theta}$-expanding endomorphism. Then $E(f,y)$ is a winning set for modified Schmidt games induced by $f$ and played on $M$.
\end{theorem}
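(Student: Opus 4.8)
The plan is to prove Theorem \ref{main3} by constructing a winning strategy for the modified Schmidt games that are induced by the expanding endomorphism $f$ itself. The key conceptual advantage of modified Schmidt games (following \cite{Wu2}) over ordinary Schmidt games is that the allowable moves are not arbitrary round metric balls but rather sets adapted to the dynamics of $f$; in particular, they should be (images or preimages of) balls that respect the way $f$ expands and distorts the metric. This flexibility is precisely what lets us dispense with the conformality hypothesis required in Theorems \ref{main1} and \ref{main2}: in the ordinary game we must inscribe round balls and control how the preimages $f^{-k}(U)$ of a small ball $U$ around $y$ sit inside them, which forces us to control eccentricity and hence demand conformality, whereas here we are free to choose move-sets that are themselves governed by the inverse branches of $f$.

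The main steps I would carry out are as follows. First I would recall the precise definition of the modified Schmidt game induced by $f$, fixing the allowable family of sets for Bob's and Alice's moves (presumably iterated inverse images of balls, or sets of the form $f^{-k}(B)$ for metric balls $B$), together with the contraction/nesting rules and the parameters $\alpha,\beta$. Second, fix a small ball $U = B(y,\rho)$ around the target point $y$; since $E(f,y)$ equals the set of $z$ whose forward orbit misses $U$, it suffices to guarantee that Alice can force the limit point $z$ to satisfy $f^k(z)\notin U$ for all large $k$, equivalently $z \notin f^{-k}(U)$ for all large $k$. Third, using that $f$ is $C^{1+\theta}$-expanding, I would record the distortion estimates for inverse branches: on each connected component, $f^{-k}$ is a $\theta$-Hölder contraction with bounded distortion, so the preimages $f^{-k}(U)$ have controlled geometry relative to the dynamically adapted move-sets. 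Fourth, the heart of the argument is a counting/covering step: inside any allowable set that Bob presents at a given stage, the number of relevant preimage-components of $U$ that could obstruct Alice is uniformly bounded (bounded by the topological degree of $f$ raised to a controlled power), and each such obstruction occupies a controlled proportion of the set, so Alice can select her next move-set to avoid all of them simultaneously. Iterating this across stages and matching the scales of the game to the expansion rate yields that the nested intersection point lies in $E(f,y)$.

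Concretely, at stage $k$ of the game I would have Alice avoid exactly those preimages $f^{-k}(U)$ whose scale has become comparable to the current move-set, so that each preimage-component is dealt with at the unique stage where it is "uniformly displayed" — this is the analogue, in the modified-game language, of the mechanism described after Theorem \ref{main1} whereby preimages are uniformly positioned inside the balls. The payoff of working with $f$-induced move-sets is that the images under the dynamics of these sets are again round balls (up to bounded distortion), which is what makes the uniform-displaying and the bounded-count estimates clean even without conformality, since any anisotropy in $Df$ is absorbed into the definition of the allowable moves rather than having to be cancelled.

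The step I expect to be the main obstacle is the fourth one: verifying that at each stage the obstructing preimages occupy a small enough proportion of Bob's current allowable set that Alice retains a legal move, uniformly in $k$ and in Bob's choices. This requires a careful interplay between three quantities — the expansion rate of $f$, the bounded-distortion constant for inverse branches coming from the $C^{1+\theta}$ hypothesis, and the game parameters $\alpha,\beta$ — and one must choose the move-set family and the parameters so that these bounds close up. Establishing the Hölder distortion control for $f^{-k}$ on the adapted sets (rather than for genuine Euclidean balls) is the technical core, and once it is in place the winning strategy and the conclusion $z\in E(f,y)$ follow by the standard nesting argument for (modified) Schmidt games.
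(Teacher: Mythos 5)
Your overall architecture matches the paper's: play the $f$-induced modified game on the tilings of Subsection 2.3, reduce membership in $E(f,y)$ to avoiding the components $I_k(c)$ of $f^{-k}(B(y,c))$ for all $k$, and use bounded distortion plus a ``avoid a definite fraction per turn'' lemma (Lemma \ref{avoid}, quoted from \cite{Wu2}). But the step you yourself flag as the main obstacle --- the counting of obstructions per stage --- is where your plan, as stated, would fail, and in two distinct ways. First, you propose to bound the number of relevant preimage-components by ``the topological degree of $f$ raised to a controlled power.'' The avoidance mechanism only closes up if $N(1-\eta)^{r}<1$, where $N$ is the number of obstructions handled in a block of $r$ turns and $\eta<\tfrac14$ is the fixed fraction from Lemma \ref{avoid}; a count of order $\deg(f)^{cr}$ grows exponentially in $r$ and defeats the decay of $(1-\eta)^{r}$, so the inequality never closes. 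Second, you propose to assign each component to the stage at which ``its scale has become comparable to the current move-set.'' Without conformality the diameter of $I_k(c)$ is only pinned between $c\sigma_2^{-k}$ and $c\sigma_1^{-k}$, so the number of indices $k$ whose components are at a given scale is \emph{not} uniformly bounded --- this is exactly the obstruction that forces the conformality hypothesis in Theorems \ref{main1} and \ref{main2}, and importing that scale-matching scheme here reintroduces it.

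The paper's proof resolves both issues by organizing the obstructions by the iteration index $k$ rather than by diameter: in the block of $r$ turns constituting step $j$, Alice handles precisely the $k$ with $jr(a+b)\le k<(j+1)r(a+b)$, so there are at most $r(a+b)$ values of $k$ --- linear in $r$, whence $(1-\eta)^{r}r(a+b)<1$ for $r$ large. The degree never enters because for each such $k$ at most \emph{one} component of $f^{-k}(B(y,c))$ meets Bob's current atom: by \eqref{e:n1} the image $f^k(\psi(\omega_{jr+1}))$ has diameter at most $L/100$ while $f$ is injective on $L$-balls, so $f^k$ is injective on a neighborhood of the atom. Finally, to verify the diameter hypothesis of Lemma \ref{avoid} uniformly in $j$, the paper pushes the whole configuration forward by $f^{jr(a+b)}$, reducing step $j$ to the step-$0$ picture, and pulls Alice's choice back. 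If you replace your counting step by this ``one component per $k$, at most $r(a+b)$ values of $k$ per block'' bookkeeping and add the push-forward reduction, your outline becomes the paper's proof; without it, the quantitative heart of the argument is missing.
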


The paper is organized as follows. In Section 2, we will describe Schmidt games and two types of variants of them. Theorem \ref{main1}, \ref{main2} and \ref{main3} will be proved in Sections 3, 4 and 5 respectively. We always let $\nu$ denote the volume measure on $M$.

\section{Schmidt games}
\subsection{Schmidt games}
Let $(X, d)$ be a complete metric space. We denote as $B(x,r)$ the closed ball of radius $r$ with center $x$. If $\omega=(x,r) \in X \times \mathbb{R}_{+}$, we also denote $B(\omega):=B(x,r)$.

Schmidt games are played by two players, Alice and Bob. Fix $0 < \alpha, \beta <1$ and a subset $S \subset X$ (the target set). Bob starts the game by choosing $x_1 \in X$ and $r_1 >0$ hence specifying a pair $\omega_1=(x_1, r_1)$. Then Alice chooses a pair $\omega'_1=(x_1', r_1')$ such that $B(\omega'_1) \subset B(\omega_1)$ and $r_1'=\alpha r_1$. In the second turn, Bob chooses a pair $\omega_2=(x_2, r_2)$ such that $B(\omega_2) \subset B(\omega_1')$ and $r_2= \beta r_1'$, and so on. In the $k$th turn, Bob and Alice choose $\omega_k=(x_k, r_k)$ and $\omega'_k=(x_k', r_k')$ respectively such that
\begin{equation*}
  B(\omega'_k) \subset B(\omega_k) \subset B(\omega_{k-1}'), \ \ r_k= \beta r_{k-1}', \ \ r_k'=\alpha r_k.
\end{equation*}
Thus we have a nested sequence of balls in $X$:
\begin{equation*}
B(\omega_1) \supset B(\omega_1') \supset \cdots \supset B(\omega_k) \supset B(\omega_k') \supset \cdots.
\end{equation*}
The intersection of all these balls consists of a unique point $x_{\infty} \in X$. We call Alice the winner if $x_{\infty} \in S$, and Bob the winner otherwise. $S$ is called an \emph{$(\alpha, \beta)$-winning set} if Alice has a strategy to win regardless of how well Bob plays, and we call such a strategy an \emph{$(\alpha, \beta; S)$-winning strategy}. $S$ is called \emph{$\alpha$-winning} if it is $(\alpha, \beta)$-winning for any $0 <\beta <1$. $S$ is called a \emph{winning set} if it is $\alpha$-winning for some $0 < \alpha <1$.

The following nice properties of a winning set are proved in \cite{S}.

\begin{proposition}\label{winpro}(cf. \cite{S})
Some properties of winning sets for Schmidt games:
\begin{enumerate}
\item If the game is played on $X=\mathbb{R}^n$ with the Euclidean metric, then any winning set is dense and has full Hausdorff dimension $n$.
\item The intersection of countably many $\alpha$-winning sets is $\alpha$-winning.
\end{enumerate}
\end{proposition}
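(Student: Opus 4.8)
The plan is to establish the two assertions by separate arguments, following Schmidt's original analysis. For (2), I would fix $0<\beta<1$ and show that $\bigcap_i S_i$ is $(\alpha,\beta)$-winning, using only that each $S_i$ is $\alpha$-winning, i.e.\ $(\alpha,\beta')$-winning for every $\beta'\in(0,1)$. First I would partition the rounds $\{1,2,3,\dots\}$ into countably many infinite subsequences $T_1,T_2,\dots$ having constant gaps---for instance $T_i=\{2^{i-1}(2m-1):m\ge1\}$, whose gap is $2^i$---and instruct Alice to follow her $S_i$-winning strategy exactly on the rounds in $T_i$. The point is that the restriction of the play to $T_i$ is itself a legitimate Schmidt game: Alice still contracts by $\alpha$, and if $k,k'$ are consecutive elements of $T_i$ then Bob's ball at round $k'$ sits inside Alice's ball at round $k$ with radius ratio $(\alpha\beta)^{k'-k}/\alpha=\beta_i\in(0,1)$, a constant since the gap is constant. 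Hence $S_i$, being $\alpha$-winning, wins this subgame, and the unique limit point---common to every subsequence---lies in all $S_i$, hence in the intersection.

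The density half of (1) is immediate: given a nonempty open $U\subset\mathbb{R}^n$, I would have Bob open with a ball $B(\omega_1)\subset U$; since $S$ is winning, the resulting limit point lies in $S\cap B(\omega_1)\subset S\cap U$, so $S$ meets every open set.

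The hard part will be the full Hausdorff dimension in (1). Here I would use crucially that $\alpha$-winning grants $(\alpha,\beta)$-winning for arbitrarily small $\beta$, and build a Cantor subset $C\subseteq S$ by letting Bob branch: inside each ball that Alice returns (radius $\alpha\rho$) Bob selects a maximal family of $N\asymp\beta^{-n}$ pairwise well-separated admissible balls of radius $\alpha\beta\rho$, and on every branch he repeats this while Alice answers with her fixed winning strategy. Because Alice wins along each infinite branch, every branch-limit lies in $S$, so the set $C$ of all branch-limits is contained in $S$. The tree has branching number $N$ and per-level contraction ratio $\alpha\beta$, so distributing unit mass uniformly and invoking the mass distribution principle gives
\begin{equation*}
\dim_H S\ge\dim_H C\ge\frac{\log N}{\log(1/(\alpha\beta))}\asymp\frac{n\log(1/\beta)}{\log(1/\alpha)+\log(1/\beta)},
\end{equation*}
which tends to $n$ as $\beta\to0$; hence $\dim_H S=n$.

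The delicate points to check carefully are that the $N$ children can be chosen disjoint and uniformly separated---so that distinct branch-limits remain apart and the Frostman-type lower bound is valid---and that Alice's strategy stays consistently defined along branches that share a common history up to their splitting round. The branching construction for the dimension bound is where the genuine work lies; the intersection and density statements are comparatively routine once the subsequence bookkeeping above is set up.
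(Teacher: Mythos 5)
Your proposal is correct and reproduces the standard arguments: the paper offers no proof of this proposition, simply citing Schmidt \cite{S}, and your three steps (the disjoint-subsequence decomposition with constant gaps, under which the induced subgame is an $(\alpha,\beta_i)$-game with $\beta_i=(\alpha\beta)^{2^i}/\alpha\in(0,1)$; the observation that Alice's winning strategy applies in particular when Bob opens inside a given open set; and the branching Cantor construction with $N\asymp\beta^{-n}$ well-separated children per level followed by the mass distribution principle and $\beta\to0$) are exactly Schmidt's original ones. I see no gap.
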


\subsection{Absolute games}
Fix $k\in \{0,1,\cdots, d-1\}$, $0 < \beta < 1/3$, and a subset $S \subset \mathbb{R}^d$ (the target set). We define the $k$-dimensional $\beta$-absolute games played on $\mathbb{R}^d$ as follows (cf. \cite{BFKRW}). Bob initially chooses $x_1 \in \mathbb{R}^d$ and $\rho_1>0$, hence specifying a closed ball $B_1=B(x_1, \rho_1)$. Then in each turn of play, after Bob chooses a closed ball $B_i=B(x_i, \rho_i)$, Alice chooses an affine subspace $L_i$ of dimension $k$ and removes its $\epsilon_i$-neighborhood $L_i^{(\epsilon_i)}$ from $B_i$ for some $0 < \epsilon_i \leq \beta \rho_i$. Then Bob chooses a closed ball $B_{i+1} \subset B_i\setminus L_i^{(\epsilon_i)}$ of radius $\rho_{i+1} \geq \beta\rho_i$. Alice wins the game if and only if
$$\bigcap_{i=1}^\infty B_i \cap S \neq \emptyset.$$
The set $S$ is said to be \emph{$k$-dimensionally $\beta$-absolute winning} if Alice has a winning strategy. We will say that $S$ is \emph{$k$-dimensionally absolute winning} if it is $k$-dimensionally $\beta$-absolute winning for every $0 < \beta< 1/3$.
\begin{proposition}\label{winningproperty}(cf. \cite{BFKRW})
Some properties of $k$-dimensional absolute winning set for any $0\leq k \leq d-1$:
\begin{enumerate}
\item $k$-dimensional absolute winning set implies $\alpha$-winning for all $0 < \alpha\leq 1/2$.
\item The countable intersection of $k$-dimensionally absolute winning sets is $k$-dimensionally absolute winning.
\item The image of a $k$-dimensionally absolute winning set under a $C^1$ diffeomorphism of $\mathbb{R}^d$ is
 $k$-dimensionally absolute winning.
\end{enumerate}
\end{proposition}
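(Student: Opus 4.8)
The three statements are standard facts about absolute winning sets (cf. \cite{BFKRW}, and \cite{Mc1} for the one-dimensional case); the plan is to prove each by comparing the relevant games and exploiting the crucial flexibility that a $k$-dimensionally absolute winning set $S$ is $\beta$-absolute winning for \emph{every} $\beta \in (0,1/3)$. The recurring geometric input is that a $k$-dimensional affine subspace $L \subset \mathbb{R}^d$ has codimension at least one, so its $\epsilon$-neighborhood $L^{(\epsilon)}$ is a thin slab: inside a ball of radius $r$ one can always place a ball of radius up to $r/2$ lying to one side of $L^{(\epsilon)}$, which is why the threshold in (1) is exactly $1/2$ and the worst case is $k=d-1$.

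For (1), I would fix $\alpha \le 1/2$ and an arbitrary $\beta \in (0,1)$ for the $(\alpha,\beta)$-Schmidt game with target $S$, and let Alice convert an absolute strategy into Schmidt moves. Choose $\beta_0 < 1/3$ with $\beta_0 \le \alpha\beta$ and run a $\beta_0$-absolute winning strategy $\sigma$ for $S$: when $\sigma$ instructs Alice to delete the neighborhood $L^{(\epsilon)}$ of a $k$-dimensional affine subspace from the current Schmidt ball $B(x_k, r_k)$ (with $\epsilon \le \beta_0 r_k$), Alice instead selects her radius-$\alpha r_k$ sub-ball $B(\omega_k')$ pushed to the side of $B(x_k,r_k)$ away from $L$, which for $\alpha \le 1/2$ avoids $L^{(\epsilon)}$. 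Bob's following Schmidt ball has radius $\alpha\beta r_k \ge \beta_0 r_k$ and lies in $B(\omega_k')$, hence realizes a legal $\beta_0$-absolute move of Bob. The balls at Alice's turns thus form a play of the $\beta_0$-absolute game won by $\sigma$, so the common point lies in $S$; since $\alpha \le 1/2$ and $\beta$ were arbitrary, $S$ is $\alpha$-winning for every $\alpha \le 1/2$.

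For (2), given $\beta < 1/3$ and sets $S_1, S_2, \ldots$, I would have Alice play the single $\beta$-absolute game while \emph{scheduling} the individual strategies on arithmetic progressions with bounded gaps. Concretely, assign the turns via the dyadic ruler pattern so that game $i$ is served on a progression of common difference $g_i$, and additionally reserve a positive-density set of turns on which Alice deletes a slab through the current center, forcing the radii to zero so that the nested balls intersect in a single point $x_\infty$. Between two consecutive turns of game $i$ the radius drops by a factor at least $\beta^{g_i}$, so Alice runs her winning strategy for $S_i$ as a $\beta_i$-absolute game with $\beta_i := \beta^{g_i} < 1/3$; the intervening moves then collapse to a single legal Bob move of that game. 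Since $S_i$ is $\beta_i$-absolute winning, the subsequence of balls at game-$i$ turns is won, whence $x_\infty \in S_i$ for every $i$ and $x_\infty \in \bigcap_i S_i$. The only delicate point is the parameter bookkeeping, which works precisely because bounded gaps permit a single fixed $\beta_i$ per game and because absolute winning grants every $\beta_i \in (0,1/3)$.

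For (3), let $g$ be a $C^1$ diffeomorphism of $\mathbb{R}^d$ and fix $\beta' < 1/3$ for the $\beta'$-absolute game with target $g(S)$; Alice wins by transporting the game through $g^{-1}$ to a $\beta$-absolute game with target $S$. Given Bob's ball $B'$, she regards the comparable ball $B \approx g^{-1}(B')$ as Bob's ball in the $S$-game, applies her $S$-strategy to obtain a $k$-dimensional affine subspace $L$, and deletes in the $g(S)$-game the $\epsilon'$-neighborhood of the affine tangent space $\widetilde L$ approximating the $C^1$ image $g(L)$ across $B'$. The main obstacle, and the only place the $C^1$ hypothesis is genuinely used, is the uniform distortion control: one must descend to small enough scales --- which Alice may force, since she can drive the radii to zero by cutting near ball centers --- so that $g$ is close enough to its linearization there that $g^{-1}(B')$ is within bounded distortion of a ball, $g(L)$ lies inside the prescribed $\epsilon'$-slab around $\widetilde L$, and the induced parameter $\beta$ remains below $1/3$; the admissibility of this $\beta$ is again guaranteed by $S$ being absolute winning throughout $(0,1/3)$. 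Carrying out these comparisons quantitatively is the technical heart of the argument, after which the transported common point $g(x_\infty)$ lies in $g(S)$, establishing that $g(S)$ is $k$-dimensionally absolute winning.
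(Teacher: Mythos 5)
The paper offers no proof of this proposition --- it is quoted verbatim from \cite{BFKRW} (with the case $k=0$ going back to \cite{Mc1}) --- so your write-up can only be judged on its own merits. Parts (2) and (3) are sound sketches of the standard arguments: the interleaving scheme with bounded gaps, plus the observation that deleting a neighborhood of a plane through the current center forces the radius to at least halve (hence radii tend to zero), is exactly the right mechanism for (2); and the transport-through-$g^{-1}$ argument with an affine approximation of $g(L)$ at small scales is the content of the $C^1$-incompressibility theorem of \cite{BFKRW} --- you correctly isolate the uniform continuity of $Dg$ on a compact set as the technical heart, though you do not carry it out.

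Part (1), however, has a genuine gap at the endpoint $\alpha=1/2$, which is precisely the case this paper invokes (it concludes that $E(f,y)$ is $1/2$-winning). Your step ``Alice selects her radius-$\alpha r_k$ sub-ball pushed to the side of $B(x_k,r_k)$ away from $L$, which for $\alpha\le 1/2$ avoids $L^{(\epsilon_k)}$'' is false when $\alpha=1/2$ and $L$ passes through (or near) the center $x_k$: any admissible center $x'$ satisfies $d(x',x_k)\le(1-\alpha)r_k=r_k/2$, hence $d(x',L)\le r_k/2$, so the closed ball $B(x',r_k/2)$ already meets $L$ itself, let alone its $\epsilon_k$-neighborhood. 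A one-move-per-deletion conversion requires $(1-2\alpha)r_k>\epsilon_k$ and therefore only yields $\alpha$-winning for $\alpha<1/2$. To reach the endpoint you must spread the avoidance of each deleted neighborhood over $n\ge 2$ Schmidt rounds: in each round Alice can increase the distance from the current Bob center to $L$ by at least $(1-2\alpha+\alpha\beta)r=\alpha\beta r$, i.e.\ by the new radius, so after $n$ rounds the center is at distance at least $r_{k+1}+\cdots+r_{k+n}>r_{k+n}+\epsilon_k$ from $L$ provided $\epsilon_k\le(\alpha\beta)^n r_k$; one then runs the absolute strategy with parameter $\beta_0=(\alpha\beta)^n<1/3$, feeding it only every $n$-th Bob ball. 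With this modification, and the rest of your bookkeeping unchanged, part (1) holds as stated.
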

Observe that the strongest case when $k=0$, is precisely McMullen's absolute winning property (cf. \cite{Mc1}). So we call a $0$-dimensionally absolute winning set just an \emph{absolute winning set}. The notion of $k$-dimensionally absolute winning sets has been extended to subsets of $C^1$ manifolds in \cite {KW3}. It is more straightforward to extend the notion of $0$-dimensionally absolute winning sets to subsets of $C^1$ manifolds. In Theorem \ref{main2} we will be mostly interested in $0$-dimensionally absolute winning sets on manifolds.

Finally, we recall the potential games introduced in \cite{FSU} which are equivalent to absolute games in some cases, but for which it is easier to describe a winning strategy. Let $S\subset \mathbb{R}^d$ be a target set, and let $\beta \in (0, 1)$, $\gamma > 0$. The $k$-dimensionally $(\beta, \gamma)$-potential game is defined as follows: Bob begins the game by choosing a closed ball $B_1 \subset \mathbb{R}^d$. Then in each turn of play, after Bob chooses a closed ball $B_i$ of radius $\rho_i$, Alice chooses a countable family of neighborhoods of affine planes of dimension $k$, $\{L_{i,j}^{(\rho_{i,j})}: j \in \mathbb{N}\}$, such that
\begin{equation}\label{e:potentialcondition}
\sum_{j=1}^\infty\rho^\gamma_{i,j}\leq (\beta\rho_i)^\gamma.
\end{equation}
Then Bob chooses a closed ball $B_{i+1} \subset B_i$ of radius $\rho_{i+1} \geq \beta\rho_i$. Alice wins the game
if and only if
$$\bigcap_{i=1}^\infty B_i \cap\left(S\cup \bigcup_{i=1}^\infty\bigcup_{j=1}^\infty L_{i,j}^{(\rho_{i,j})}\right) \neq \emptyset.$$
The set $S$ is called \emph{$k$-dimensionally $(\beta, \gamma)$-potential winning} if Alice has a winning strategy, and is called \emph{$k$-dimensionally potential winning} if it is $k$-dimensionally $(\beta, \gamma)$-potential winning for any $\beta \in (0,1)$ and $\gamma > 0$. The following lemma follows from Theorem C.8 in \cite{FSU}.
\begin{lemma}\label{potential}
A subset $S \subset \mathbb{R}^d$ is $0$-dimensionally potential winning if and only if it is $0$-dimensionally absolute winning.
\end{lemma}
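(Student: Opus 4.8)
The plan is to obtain Lemma~\ref{potential} as the special case $k=0$ of Theorem~C.8 in \cite{FSU}, which asserts in general the equivalence between $k$-dimensional potential winning and $k$-dimensional absolute winning for target sets $S\subset\mathbb{R}^d$. The only thing that genuinely has to be done is to confirm that the two games defined above are precisely the $0$-dimensional instances to which that theorem applies. Concretely, I would check that: the obstacles in both games are $\epsilon$-neighborhoods of $0$-dimensional affine planes, i.e. closed balls; the quantifiers match, namely potential winning here means $(\beta,\gamma)$-potential winning for \emph{all} $\beta\in(0,1)$ and $\gamma>0$, while absolute winning means $\beta$-absolute winning for \emph{all} $\beta\in(0,1/3)$; and the radius normalizations (the lower bound $\rho_{i+1}\ge\beta\rho_i$ for Bob, the upper bound $\epsilon_i\le\beta\rho_i$ for Alice in the absolute game, and the summability constraint \eqref{e:potentialcondition} in the potential game) coincide with those in \cite{FSU}. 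With these identifications the lemma is exactly Theorem~C.8 read at $k=0$.

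For the reader's benefit I would then indicate why the equivalence holds, concentrating on the implication actually used later, that $0$-dimensional potential winning implies $0$-dimensional absolute winning. Given a potential strategy for Alice, the idea is to run a single play of the absolute game while simulating a play of the potential game, replacing each potential move---in which Alice deletes a whole countable family $\{L_{i,j}^{(\rho_{i,j})}\}$ at once---by a block of consecutive absolute moves in which she deletes the members of the family one ball at a time. The summability condition $\sum_{j}\rho_{i,j}^{\gamma}\le(\beta\rho_i)^{\gamma}$ forces the radii $\rho_{i,j}$ to decay, and combined with the fixed geometric rate at which Bob's balls shrink it guarantees that, at any given scale, only finitely many of the $L_{i,j}^{(\rho_{i,j})}$ are large enough to still meet Bob's current ball, so that the remaining ones can be ignored. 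Choosing the exponent $\gamma$ appropriately relative to the absolute parameter $\beta$ then ensures that each ball removed during the block has radius at most $\beta$ times the radius of the current Bob ball, as the absolute game requires.

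I expect the main obstacle to be exactly this reconciliation of the two move structures: converting simultaneous, countably infinite family removals into single-ball removals spread over many turns, while simultaneously respecting Bob's radius lower bound, Alice's radius upper bound, and the summability exponent, and then verifying that, because Bob must avoid every removed ball in the absolute game, the potential winning condition $\bigcap_i B_i\cap(S\cup\bigcup_{i,j}L_{i,j}^{(\rho_{i,j})})\neq\emptyset$ forces the common limit point to lie in $S$. The converse implication, that absolute winning implies potential winning, is the complementary half of Theorem~C.8 and follows a parallel simulation in the other direction; since the only use of Lemma~\ref{potential} in this paper is to deduce absolute winning from a potential-game strategy, I would simply invoke \cite{FSU} for both implications rather than reproduce the bookkeeping here.
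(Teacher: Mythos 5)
Your proposal is correct and takes essentially the same route as the paper: the paper's entire justification for Lemma~\ref{potential} is the single sentence citing Theorem~C.8 of \cite{FSU}, which is precisely what you do (your additional sketch of the simulation argument is accurate but goes beyond what the paper records). No discrepancy to report.
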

In Section 4, we will prove Theorem \ref{main2} by describing a winning strategy for $0$-dimensionally potential games.
\subsection{Modified Schmidt games induced by an expanding endomorphism}
In \cite{Wu2}, we have defined a type of modified Schmidt games induced by a partially hyperbolic diffeomorphism and played on any unstable manifold. In this section, we modify the construction in \cite{Wu2} to define a type of modified Schmidt games induced by an expanding endomorphism. We follow closely the notations used in \cite{Wu2}, and omit proofs here. Let $f: M \to M$ be a $C^{1+\theta}$-expanding endomorphism in this subsection.
\begin{definition}\label{tiling}
We always let $\epsilon>0$ be very small. If there exists a family of subsets $D_n^i$ ($n=1,2,\cdots $, $i=1,2,\cdots, k_n$) of $M$ such that
 \begin{enumerate}
\item each $D_n^i$ is an open and connected subset of $M$ satisfying
  $$B(f^n(x_n^i), \frac{\epsilon}{2}) \subset f^n(D_n^i) \subset B(f^n(x_n^i), \epsilon)$$
  for some $x_n^i \in D_n^i$,
\item $D_n^i \cap D_n^j=\emptyset$ for every $n$ and $i \neq j$,
\item $M= \bigcup_{i=1}^{k_n} \overline{D_n^i}$ for every $n$,
\end{enumerate}
then we say $\{D_n^i\}$ form a family of $f$-induced $\epsilon$-tilings on $M$. We denote it by $\mathcal{T}$ which consists of countable tilings $\{\mathcal{T}_n\}_{n=1}^\infty$ with $\mathcal{T}_n:=\{D_n^i\}_{i=1}^{k_n}$. Denote $\mathbf{T}_n=\bigcup_{i=1}^{k_n}D_n^i$ and $\mathbf{T}=\bigcup_{n=1}^\infty \mathbf{T}_n$.
\end{definition}

\begin{example}\label{exampletiling}
Let $S(\epsilon)$ be a maximal $\epsilon$-separated set in $M$. For each $z\in S(\epsilon)$, define
$$D_1(z):=\{y\in M: d(z,y)< d(w,y) \text{\ for any\ } w \in S(\epsilon) \text{\ and\ } w\neq z\}.$$
Let $\mathcal{T}_1:=\{D_1(z): z\in S(\epsilon)\}$. Note that $f^{-(n-1)}(D_1(z))$ consists of finitely many connected components for any $n\geq 1$. We collect all these connected components for every $D_1(z) \in \mathcal{T}_1$ and they form $\mathcal{T}_n$. Then one can verify that $\{\mathcal{T}_n\}_{n=1}^\infty$ form a family of $f$-induced $\epsilon$-tilings on $M$.
\end{example}

Now let us fix a family of $f$-induced $\epsilon$-tilings $\mathcal{T}$ on $M$. If $z\in \mathbf{T}_n$ we use $D_n^i(z)$ to denote the $n$-atom in $\mathcal{T}$ which contains $z$. We also write $\psi(z,n)=\overline{D_n^i(z)}$. We can define a partial order on $\Omega=\{(z,n):z\in \mathbf{T}_n, n\geq 1\}$:
$$(z,n)\leq (z',n') \Leftrightarrow \psi(z,n) \subset \psi(z',n').$$
The following properties of $\mathcal{T}$ is crucial in defining modified Schmidt games and establishing nice properties of winning sets.
\begin{flushleft}
\textbf{(MSG0)} There exists $a_* \in \mathbb{N}_+$ such that the following property holds: for any $(z,n)\in \Omega$ and any $m>a_*$ there exists $z'\in \mathbf{T}_{m+n}$ such that $(z',m+n) \leq (z,n)$.
\end{flushleft}
\begin{flushleft}
  \textbf{(MSG1)} For any nonempty open set $U \subset M$, there is $(z,n)\in \Omega$ such that $\psi(z,n) \subset U$.
\end{flushleft}
\begin{flushleft}
\textbf{(MSG2)} There exist $C, \sigma >0$, such that $\text{diam}(\psi(z,n)) \leq C\exp(-\sigma n)$ for all $(z,n)\in \Omega$.
\end{flushleft}
\begin{flushleft}
\textbf{($\nu 1$)} $\nu(\psi(z,n)) >0, \text{\ for any\ } (z,n)\in \Omega$.
\end{flushleft}
\begin{flushleft}
    \textbf{($\nu 2$)} For any $a > a_*$, there exists $c=c(a)> 0$ satisfying the following property: for any $\omega=(z,n)\in \Omega$ and any $b>a_*$, there exist $\theta_1=(z_1,n+b), \cdots, \theta_N=(z_N,n+b)\in \Omega$ such that
    \begin{enumerate}
  \item $\psi(\theta_1), \cdots, \psi(\theta_N) \subset \psi(\omega)$ and they are essentially disjoint;
  \item for every $\psi(\theta_i') \subset \psi(\theta_i)$ where $\theta_i'=(z_i',n+a+b)$ one has
  $$\nu(\bigcup_{i=1}^N\psi(\theta_i')) \geq c\nu(\psi(\omega)).$$
\end{enumerate}
\end{flushleft}

To prove \textbf{($\nu 2$)}, we use the following bounded distortion lemma. Thus we can not weaken the regularity of $f$ from $C^{1+\theta}$ to $C^1$.
\begin{lemma}\label{bd}(Bounded distortion lemma, cf. Theorem 3.2 in \cite{U})
Let $f: M \to M$ be a $C^{1+\theta}$-expanding endomorphism, and $c>0$ be small enough, $k \geq 0$. For any $z_1, z_2$ with $f^k(z_1)\in B(f^k(z_2),c)$, one has
$$\frac{1}{K} \leq \frac{\text{Jac}(f^k)(z_1)}{\text{Jac}(f^k)(z_2)} \leq K$$
for some $K=K(c)$, and $K \to 1$ as $c \to 0$.
\end{lemma}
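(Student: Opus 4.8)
The plan is to reduce the Jacobian of the iterate to a telescoping product of Jacobians of $f$ via the chain rule, and then to play the exponential contraction of inverse branches against the $\theta$-Hölder regularity of $\log\text{Jac}(f)$. First I would record the two standing facts coming from $f$ being a $C^{1+\theta}$-expanding endomorphism of the compact manifold $M$. After passing to an adapted metric there is $\lambda>1$ with $\|D_xf\,v\|\geq \lambda\|v\|$ for all $x$ and $v$; and since $\text{Jac}(f)=|\det Df|$ is a smooth nonvanishing function of the $\theta$-Hölder derivative $Df$, it is bounded away from $0$ and $\infty$ on $M$, so $\varphi:=\log\text{Jac}(f)$ is $\theta$-Hölder, say $|\varphi(x)-\varphi(y)|\leq C_0\,d(x,y)^\theta$ for $d(x,y)$ small.

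Next I would use the multiplicativity of the Jacobian along orbits. The chain rule gives $\text{Jac}(f^k)(z)=\prod_{j=0}^{k-1}\text{Jac}(f)(f^j(z))$, hence
$$\left|\log\frac{\text{Jac}(f^k)(z_1)}{\text{Jac}(f^k)(z_2)}\right|\leq \sum_{j=0}^{k-1}\left|\varphi(f^j(z_1))-\varphi(f^j(z_2))\right|\leq C_0\sum_{j=0}^{k-1}d(f^j(z_1),f^j(z_2))^\theta.$$
The heart of the matter is to control the intermediate distances $d(f^j(z_1),f^j(z_2))$. Here I would take $c$ smaller than the expansivity (injectivity) scale of $f$, so that the inverse branch of $f^k$ determined by the orbit of $z_2$ is a well-defined contraction on $B(f^k(z_2),c)$; following this branch back from $f^k(z_1)$ returns $z_1$, and each application of the branch of $f^{-1}$ contracts by at least $\lambda^{-1}$. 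This yields $d(f^j(z_1),f^j(z_2))\leq \lambda^{-(k-j)}d(f^k(z_1),f^k(z_2))\leq \lambda^{-(k-j)}c$.

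Finally I would sum the resulting geometric series. Writing $m=k-j$,
$$\left|\log\frac{\text{Jac}(f^k)(z_1)}{\text{Jac}(f^k)(z_2)}\right|\leq C_0\,c^\theta\sum_{m=1}^{\infty}\lambda^{-m\theta}=\frac{C_0\,\lambda^{-\theta}}{1-\lambda^{-\theta}}\,c^\theta=:\log K,$$
which converges precisely because $\lambda^{-\theta}<1$, and the bound is uniform in $k$. Exponentiating gives $1/K\leq \text{Jac}(f^k)(z_1)/\text{Jac}(f^k)(z_2)\leq K$ with $K=K(c)=\exp\!\big(\tfrac{C_0\lambda^{-\theta}}{1-\lambda^{-\theta}}c^\theta\big)\to 1$ as $c\to 0$, since the exponent tends to $0$.

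The step I expect to be the genuine obstacle — and the only place the hypotheses really bite — is the control of the intermediate distances: one must verify that $z_1$ and $z_2$ lie on a common inverse branch of $f^k$, so that the one-step contraction estimate iterates uniformly in $k$. This is exactly where the smallness of $c$ (below the expansivity scale) is used; the $C^{1+\theta}$ assumption, in turn, is what makes $\varphi$ Hölder and the summed series finite, and the proof genuinely fails for merely $C^1$ maps because then $\varphi$ need only be continuous and the telescoping sum need not be summable.
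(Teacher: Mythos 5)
The paper itself gives no proof of this lemma (it is quoted from Theorem 3.2 of \cite{U}), but your argument is the standard one and is structurally identical to the proofs the paper does supply for its two analogues, Lemma \ref{BDconformal} and Lemma \ref{BDconformal2}: write $\log \text{Jac}(f^k)$ as a Birkhoff sum of the $\theta$-H\"older function $\log\text{Jac}(f)$, control the intermediate distances $d(f^j(z_1),f^j(z_2))$ geometrically, and sum a geometric series. The chain-rule, H\"older, and summation steps are all correct, your $K(c)=\exp\bigl(\tfrac{C_0\lambda^{-\theta}}{1-\lambda^{-\theta}}c^\theta\bigr)$ does tend to $1$ as $c\to 0$, and your closing remark about why $C^1$ is insufficient matches the paper's own comment preceding the lemma.

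The gap is exactly at the step you flag, and it cannot be closed from the hypothesis as literally stated. From ``$f^k(z_1)\in B(f^k(z_2),c)$'' alone you cannot conclude that following the inverse branch of $f^k$ determined by the orbit of $z_2$ back from $f^k(z_1)$ returns $z_1$: an expanding endomorphism has degree greater than one, so $f^k$ has many inverse branches over $B(f^k(z_2),c)$ no matter how small $c$ is, and $z_1$ may lie on a different one. Concretely, for a degree-two $C^{1+\theta}$ expanding circle map with nonconstant derivative, pick $z_1\neq z_2$ with $f(z_1)=f(z_2)$ and $\text{Jac}(f)(z_1)\neq\text{Jac}(f)(z_2)$; then the hypothesis holds for $k=1$ and every $c>0$, while the ratio of Jacobians is a fixed constant different from $1$, so no $K(c)\to 1$ exists. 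Hence ``taking $c$ below the expansivity scale'' does not do the work you assign to it. The intended hypothesis --- the one used in \cite{U} and in the paper's Lemmas \ref{BDconformal} and \ref{BDconformal2} --- is the Bowen-ball condition $z_2\in B(z_1,k,c)$, i.e.\ $d(f^i(z_1),f^i(z_2))\leq c$ for all $0\leq i\leq k-1$. Under that hypothesis your contraction estimate is immediate: the orbits stay within distance $c<\rho_0$ at every intermediate time, $f$ is injective and expands distances by at least $\lambda$ on each $B(f^i(z_2),c)$, so $d(f^{i+1}(z_1),f^{i+1}(z_2))\geq \lambda\, d(f^i(z_1),f^i(z_2))$, which iterates to $d(f^j(z_1),f^j(z_2))\leq \lambda^{-(k-1-j)}c$, and the rest of your proof goes through verbatim. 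You should either adopt the Bowen-ball hypothesis explicitly or add the assumption that $z_1$ lies in the connected component of $f^{-k}(B(f^k(z_2),c))$ containing $z_2$.
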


Finally we define a type of modified Schmidt games induced by $f$ on $M$ with respect to $\mathcal{T}$ as follows. Fix $a, b \in \mathbb{N}_+$ both larger than $a_*$ and a subset $S$ of $M$(the target set). Bob starts the $(a,b)$-game by choosing a $\mathcal{T}_{n_1}$-atom $D_{n_1}(z_1)$ and hence specifying a pair $\omega_1=(z_1,n_1)\in \Omega$. By virtue of (MSG0), Alice can choose a pair $\omega'_1=(z'_1,n'_1)$ such that $\psi(\omega'_1)\subset \psi(\omega_1)$ and $n'_1=n_1+a$. In the second turn, Bob chooses a pair $\omega_2=(z_2, n_2)$ such that $\psi(\omega_2) \subset \psi(\omega_1')$ and $n_2= n_1'+b$, and so on. In the $k$th turn, Bob and Alice choose $\omega_k=(z_k, n_k)$ and $\omega'_k=(z_k', n_k')$ respectively such that
\begin{equation*}
  \psi(\omega'_k) \subset \psi(\omega_k) \subset \psi(\omega_{k-1}'), \ \ n_k= n_{k-1}'+b, \ \ n_k'=n_k+a.
\end{equation*}
Note that Bob and Alice can always make their choices by virtue of (MSG0). Thus we have a nested sequence of atoms in $\mathcal{T}$:
\begin{equation*}
\psi(\omega_1) \supset \psi(\omega_1') \supset \cdots \supset \psi(\omega_k) \supset \psi(\omega_k') \supset \cdots.
\end{equation*}
By (MSG2), the intersection of all these atoms consists of a unique point $z_{\infty} \in M$ as $M$ is a complete metric space. We call Alice the winner if $z_{\infty} \in S$, and Bob the winner otherwise. $S$ is called an \emph{$(a, b)$-winning set} for the \emph{modified Schmidt games induced by $f$} if Alice has a strategy to win regardless of how well Bob plays, and we call such a strategy an \emph{$(a, b; S)$-winning strategy}. $S$ is called \emph{$a$-winning} if it is $(a,b)$-winning for any $b >a_*$. $S$ is called a \emph{winning set} if it is $a$-winning for some $a > a_*$. Using properties (MSG 0-2) and ($\nu$ 1-2) of $\mathcal{T}$, we can prove (cf. \cite{Wu2}):
\begin{proposition}\label{winproperty}(cf. \cite{Wu2})
Some properties of winning sets for modified Schmidt games induced by $f$ on $M$ with respect to $\mathcal{T}$:
\begin{enumerate}
\item An $(a,b)$-winning set is dense in $M$.
\item The intersection of countably many $a$-winning sets is $a$-winning.
\item Let $S\subset M$ be an $a$-winning set. Then for any open $U\subset M$ one has $\dim_H(S \cap U) = \dim M$.
\end{enumerate}
\end{proposition}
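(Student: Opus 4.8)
The plan is to verify the three assertions in turn, using only the structural properties \textbf{(MSG0)}--\textbf{(MSG2)} and \textbf{($\nu 1$)}--\textbf{($\nu 2$)} of the tiling $\mathcal{T}$, exactly as in \cite{Wu2}. For the density statement (1), I would show that an $(a,b)$-winning set $S$ meets every nonempty open $U\subset M$. Given such a $U$, property \textbf{(MSG1)} produces a pair $(z,n)\in\Omega$ with $\psi(z,n)\subset U$; if Bob opens the $(a,b)$-game with $\omega_1=(z,n)$, then Alice's winning strategy forces the terminal point $z_\infty$ into $S$, while $z_\infty\in\psi(\omega_1)\subset U$ by nestedness. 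Hence $S\cap U\neq\emptyset$, so $S$ is dense.

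For the countable-intersection statement (2), I would use the standard interleaving (diagonalization) argument, the point being that $a$-winning lets Alice invoke the winning strategy for each factor with an arbitrarily large Bob-step. Fix $b>a_*$; I claim $\bigcap_k S_k$ is $(a,b)$-winning. Partition the indices of Alice's successive moves by residue classes, assigning move number $j$ to the factor $S_k$ precisely when $v_2(j)=k-1$, i.e. $j\equiv 2^{k-1}\pmod{2^k}$. These classes are disjoint and exhaust $\mathbb{N}_+$, and the moves servicing a fixed $S_k$ occur along an arithmetic progression of common difference $2^k$. The depth accumulated between consecutive $S_k$-services then equals the constant $B_k:=2^k(a+b)-a>a_*$, so the positions Alice sees when servicing $S_k$ constitute a legitimate play of the $(a,B_k)$-game; since $S_k$ is $a$-winning it is $(a,B_k)$-winning, and Alice plays move $j$ according to that strategy. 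The real game and each virtual $S_k$-game share the same nested intersection point $z_\infty$ (a subsequence of a nested sequence has the same intersection), so $z_\infty\in S_k$ for every $k$, whence $z_\infty\in\bigcap_k S_k$. As $b$ was arbitrary, $\bigcap_k S_k$ is $a$-winning.

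The heart of the matter is the full-dimension statement (3), which is where \textbf{($\nu 1$)}--\textbf{($\nu 2$)} and the bounded distortion behind them enter. Given an $a$-winning $S$ and an open $U$, I would build a Cantor subset $K\subset S\cap U$ inside the game tree: fix the diameter-controlling $b$ large, start from an atom $\psi(\omega_0')\subset U$ obtained as in (1), and at each Alice-position $\psi(\omega')$ of depth $n$ apply \textbf{($\nu 2$)} to get essentially disjoint Bob-children $\theta_1,\dots,\theta_N$ at depth $n+b$. The crucial feature is that \textbf{($\nu 2$)}(ii) guarantees that \emph{whatever} responses $\theta_i'$ Alice's winning strategy returns (I do not control that strategy), the union of the resulting Alice-children retains $\nu$-measure at least $c\,\nu(\psi(\omega'))$, with $c=c(a)$ \emph{independent of $b$}. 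Every branch of the resulting tree is a legal play won by Alice, so $K\subset S$, and $K\subset\psi(\omega_0')\subset U$. I then place a mass distribution $\mu$ on $K$ by splitting each node's mass among its children in proportion to $\nu$; telescoping the bound $\sum_i\nu(A_{v,i})\ge c\,\nu(A_v)$ along a path to a generation-$m$ node $A_v$ yields $\mu(A_v)\le c^{-m}\,\nu(A_v)/\nu(A_{\mathrm{root}})$.

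To convert this into a Hausdorff-dimension bound I would run a stopping-time argument. For a small ball $B(x,r)$, consider the essentially disjoint stopping atoms $A_w\subset B(x,2r)$ that are first along their branch with $\operatorname{diam}(A_w)\le r$; their parents have diameter exceeding $r$, so by \textbf{(MSG2)} the stopping generation obeys $m_w\le \log(1/r)/(\sigma(a+b))+O(1)$. Combining $\sum_w\nu(A_w)\le\nu(B(x,2r))\le C\,r^{\dim M}$ with the telescoping bound gives $\mu(B(x,r))\le c^{-\max_w m_w}\,C\,r^{\dim M}/\nu(A_{\mathrm{root}})\le C'\,r^{\,s}$ with $s=\dim M-\log(1/c)/(\sigma(a+b))$. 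Since $c=c(a)$ does not depend on $b$, letting $b\to\infty$ pushes $s$ up to $\dim M$, and the mass distribution principle yields $\dim_H(S\cap U)\ge\dim_H K\ge s\to\dim M$, the reverse inequality being trivial. The main obstacle is precisely this last non-conformal estimate: because a general expanding endomorphism distorts atoms, atoms of a fixed generation need not have comparable diameters, so one cannot simply match a generation to the scale $r$; it is the stopping-time antichain, together with the volume bound $\nu(B(x,2r))\le C\,r^{\dim M}$ and the $b$-independence of $c$, that makes the argument go through.
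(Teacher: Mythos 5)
Your proposal is correct and follows essentially the route the paper intends: the paper itself omits the proof, deferring to \cite{Wu2}, and your three arguments --- density via \textbf{(MSG1)} and nestedness, Schmidt's interleaving/diagonalization with effective Bob-step $2^k(a+b)-a>a_*$ for the countable intersection, and the Kleinbock--Weiss-style Cantor construction with a mass distribution, a stopping-time antichain, the power law for $\nu$, and the $b$-independence of $c(a)$ from \textbf{($\nu 2$)} to push the exponent up to $\dim M$ --- are exactly the standard proofs built on the listed axioms \textbf{(MSG0)}--\textbf{(MSG2)} and \textbf{($\nu 1$)}--\textbf{($\nu 2$)}. I see no gaps beyond routine technicalities (e.g.\ selecting a consistent branch through the atoms containing a point of $K$), which are handled in the cited reference.
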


\section{Proof of Theorem \ref{main1}}
Let $X$ be an $n$-dimensional smooth and compact Riemannian manifold, and $\nu$ be the volume measure on $X$. Then $\nu$ has the following properties (cf. \cite{Wu}):
\begin{enumerate}
  \item $\nu$ satisfies a power law, i.e. there exist positive numbers $c_1, c_2, \rho_0$ such that:
$$c_1\rho^n \leq \nu (B(z, \rho)) \leq c_2\rho^n \ \ \ \ \ \forall z\in X, \ \ \ \forall 0<\rho<\rho_0;$$
  \item $\nu$ is a Federer measure on $X$, i.e. there exist positive numbers $D, \rho_0$ such that
$$\nu (B(x, 2\rho)) < D \nu (B(x, \rho)), \ \ \  \forall x \in X, \ \ \forall 0 < \rho < \rho_0;$$
  \item there exist $\rho_0 >0$ and some $C>0$ such that
$$\nu(B(x_1, \rho)\cap B(x_2, \epsilon \rho)) \leq C\epsilon^n \nu(B(x_1, \rho))$$
for any $x_1, x_2 \in X$, $ \forall 0 < \rho < \rho_0, \ 0 < \epsilon <1$.
\end{enumerate}
The following lemma is crucial for Alice to make choices in Schmidt games.
\begin{lemma} \label{Ne}(cf. \cite{BFK}, \cite{Wu})
Let $C, D, \rho_0$ be as above, and
\begin{equation} \label{e:winning}
\begin{aligned}
0< \alpha < \frac{1}{2}\left(\frac{1}{CD}\right)^{\frac{1}{n}}.
\end{aligned}
\end{equation}
Then there exists $\epsilon = \epsilon(C, D) \in (0, 1)$, such that if $x_1 \in X$, $0 <\rho < \rho_0$, $y_1, y_2, \cdots, y_N$ are $N$ points in $X$, there exists $x_2 \in X$ such that
$$B(x_2, \alpha \rho) \subset B(x_1, \rho),$$
and
$$B(x_2, \alpha \rho)\cap B(y_i, \alpha \rho)= \emptyset$$
for at least $\lceil \epsilon N \rceil$ (the smallest integer $\geq \epsilon N$) of the points $y_i, \ 1\leq i \leq N$.
\end{lemma}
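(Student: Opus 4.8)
The plan is to reduce the geometric disjointness requirement to a pointwise membership condition on the prospective center $x_2$, and then run a first-moment (averaging) argument against the measure $\nu$. First I would record two elementary reformulations. The closed balls $B(x_2,\alpha\rho)$ and $B(y_i,\alpha\rho)$ are disjoint exactly when $d(x_2,y_i)>2\alpha\rho$, i.e. when $x_2\notin B(y_i,2\alpha\rho)$; and, by the triangle inequality, the inclusion $B(x_2,\alpha\rho)\subset B(x_1,\rho)$ holds as soon as $x_2\in W:=B(x_1,(1-\alpha)\rho)$. Hence it suffices to produce a center $x_2\in W$ that lies outside $B(y_i,2\alpha\rho)$ for at least $\lceil\epsilon N\rceil$ of the indices $i$.

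Next I would count the bad indices on average. Introduce the counting function $g(x)=\#\{i:\ x\in B(y_i,2\alpha\rho)\}=\sum_{i=1}^N\mathbf{1}_{B(y_i,2\alpha\rho)}(x)$ and integrate over $W$. By Tonelli's theorem,
$$\int_W g\,d\nu=\sum_{i=1}^N\nu\bigl(W\cap B(y_i,2\alpha\rho)\bigr)\le\sum_{i=1}^N\nu\bigl(B(x_1,\rho)\cap B(y_i,2\alpha\rho)\bigr).$$
Since $0<2\alpha<1$ and $0<\rho<\rho_0$, property (3) applies with small-ball ratio $2\alpha$, giving $\nu(B(x_1,\rho)\cap B(y_i,2\alpha\rho))\le C(2\alpha)^n\nu(B(x_1,\rho))$ for each $i$. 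To keep the loss bounded when passing to the smaller denominator, I would use that $\alpha<1/2$ forces $B(x_1,\rho/2)\subset W$, so the Federer (doubling) property yields $\nu(B(x_1,\rho))\le D\,\nu(W)$. Combining these estimates gives the average bound
$$\frac{1}{\nu(W)}\int_W g\,d\nu\le N\,CD(2\alpha)^n.$$

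Finally I would invoke the hypothesis on $\alpha$. Since $\alpha<\tfrac12(CD)^{-1/n}$ we have $(2\alpha)^n<1/(CD)$, so $\delta:=CD(2\alpha)^n<1$; set $\epsilon:=1-\delta$. As $\nu(W)>0$ by the power law (property (1)), the average of $g$ over $W$ is at most $\delta N$, and if $g>\delta N$ held $\nu$-a.e. on $W$ the integer-valuedness of $g$ would force $\int_W g\,d\nu>\delta N\,\nu(W)$, a contradiction; hence there is $x_2\in W$ with $g(x_2)\le\delta N$, i.e. $g(x_2)\le\lfloor\delta N\rfloor$. The number of indices $i$ with $B(x_2,\alpha\rho)\cap B(y_i,\alpha\rho)=\emptyset$ is then $N-g(x_2)\ge N-\lfloor\delta N\rfloor=\lceil(1-\delta)N\rceil=\lceil\epsilon N\rceil$, and $x_2\in W$ ensures $B(x_2,\alpha\rho)\subset B(x_1,\rho)$, which completes the proof with $\epsilon=1-CD(2\alpha)^n\in(0,1)$.

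The argument is essentially a first-moment/pigeonhole computation, so I do not expect a deep obstacle; the two points demanding care are the correct scaling when applying property (3) (one must read the ratio as $2\alpha$, not $\alpha$, which is where $\alpha<1/2$ first enters) and the doubling step, which is what keeps the loss from replacing $B(x_1,\rho)$ by $W=B(x_1,(1-\alpha)\rho)$ down to the single factor $D$. The quantitative hypothesis on $\alpha$ is exactly what renders $CD(2\alpha)^n<1$, turning the average estimate into the existence of a genuinely good center. Strictly speaking the resulting $\epsilon$ also depends on the chosen $\alpha$, but as $\alpha$ is selected as a function of $C$ and $D$ this dependence is harmless.
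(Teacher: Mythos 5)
Your proof is correct, and it is essentially the standard argument: the paper itself gives no proof of this lemma, deferring to \cite{BFK} and \cite{Wu}, where the same first-moment/counting reduction (disjointness $\Leftrightarrow$ the center avoids $B(y_i,2\alpha\rho)$, then an average of the counting function over $B(x_1,(1-\alpha)\rho)$ bounded via properties (2) and (3)) is used. The only points worth recording are the ones you already flag: $2\alpha<1$ (which holds since $C,D\geq 1$ for the volume measure, so the hypothesis forces $\alpha<1/2$) and the harmless dependence of $\epsilon=1-CD(2\alpha)^n$ on the fixed $\alpha$.
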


In the remaining of this section, we always let $f: M\to M$ be a $C^2$-Anosov diffeomorphism with conformality on unstable manifolds, as in Theorem \ref{main1}. Then there exist constants $\lambda <1<\sigma_1<\sigma_2$ and a nontrivial $Df$-invariant splitting of the tangent bundle $TM= E^s \oplus E^u$ into so-called stable and unstable distributions, such that all unit vectors $v^{\sigma} \in E_x^\sigma$ ($\sigma= s,u$) with $x\in M$ satisfy
\begin{equation*}
\|D_xfv^s\| < \lambda <1< \sigma_1<\|D_xfv^u\|<\sigma_2,
\end{equation*}
for some suitable Riemannian metric on $M$. It is well known that the distributions $E^s$ and $E^u$ are H\"{o}lder continuous over $M$. Moreover, $E^s$ and $E^u$ are integrable: there exist foliations $W^s$ and $W^u$ such that $TW^s=E^s$ and $TW^u=E^u$. We denote by $B^\sigma(x, r)$ the open ball inside $W^\sigma$ with respect to the Riemannian metric $d^\sigma$ induced from $M$ ($\sigma=s,u$). Fix $0 <\tau <1$. For any $z \in M$, we define
\begin{equation*}
B_l(z):=B(z, \tau^l),
\end{equation*}
\begin{equation*}
C_l(z):=\bigcup_{u \in B^s (z, \tau^l)}B^u(u, \tau^l),
\end{equation*}
and
\begin{equation*}
D_l(z):=\bigcup_{w \in B^u (z, \tau^l)}B^s(w, \tau^l).
\end{equation*}
\begin{lemma}\label{sets}
There exist $l_1,l_2 >0$ such that for any $z\in M$
\begin{enumerate}
  \item $C_{l}(z) \subset B_{l-l_1}(z)$ and $B_{l}(z) \subset C_{l-l_1}(z)$ for any $l \geq l_1$;
  \item $D_{l}(z) \subset B_{l-l_2}(z)$ and $B_{l}(z) \subset D_{l-l_2}(z)$ for any $l \geq l_2$.
\end{enumerate}
\end{lemma}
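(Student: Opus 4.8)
The plan is to reduce everything to two geometric facts about the stable and unstable foliations: first, that the leaf distances $d^s,d^u$ dominate the ambient distance $d$ along leaves, and second, a uniform local product structure comparing $d$ near a point $z$ with the pair $(d^s,d^u)$. Since the two assertions are symmetric under interchanging the roles of $W^s$ and $W^u$, I would prove part (1) in detail and obtain part (2) verbatim by swapping $s\leftrightarrow u$, with $l_2$ playing the role of $l_1$.

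For the outer inclusions $C_l(z)\subset B_{l-l_1}(z)$ (and $D_l(z)\subset B_{l-l_2}(z)$) I would use that any curve lying in a single leaf is also a curve in $M$, so that $d\le d^s$ and $d\le d^u$ wherever the leaf distances are defined. If $p\in C_l(z)$, pick $u\in B^s(z,\tau^l)$ with $p\in B^u(u,\tau^l)$; then $d(z,p)\le d(z,u)+d(u,p)\le d^s(z,u)+d^u(u,p)<2\tau^l$. Choosing $l_1$ with $\tau^{-l_1}\ge 2$, i.e.\ $l_1\ge \log 2/|\log\tau|$, gives $d(z,p)<\tau^{l-l_1}$, so $p\in B_{l-l_1}(z)$. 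This direction is elementary.

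The substance is in the inner inclusions $B_l(z)\subset C_{l-l_1}(z)$ (resp.\ $B_l(z)\subset D_{l-l_2}(z)$), for which I would establish a uniform local product structure. Because $E^s$ and $E^u$ are continuous and complementary on the compact manifold $M$, the angle between them is bounded below by some $\theta_0>0$; because $f$ is $C^2$, the individual leaves are smooth with uniformly bounded geometry. Combining these (for instance by transporting the two leaves into an exponential chart at $z$, where they appear as uniformly transverse, uniformly $C^1$ graphs, and applying the implicit function theorem) yields constants $\delta_0>0$ and $K\ge 1$ such that whenever $d(z,p)<\delta_0$ the intersection $u:=W^s_{\mathrm{loc}}(z)\cap W^u_{\mathrm{loc}}(p)$ is a single point with $d^s(z,u)\le K\,d(z,p)$ and $d^u(u,p)\le K\,d(z,p)$. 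Granting this, let $p\in B_l(z)$, so $d(z,p)<\tau^l$. Fixing $l_1$ large enough that simultaneously $\tau^{l_1}\le\delta_0$ (so the bracket is defined, using $d(z,p)<\tau^l\le\tau^{l_1}$) and $\tau^{l_1}\le 1/K$ (so $K\tau^l\le\tau^{l-l_1}$), we obtain $u\in B^s(z,\tau^{l-l_1})$ and $p\in B^u(u,\tau^{l-l_1})$, hence $p\in C_{l-l_1}(z)$. Taking $l_1$ to be the maximum of the three thresholds (including $\log 2/|\log\tau|$ from the outer inclusion) completes part (1), and the symmetric choice gives $l_2$ for part (2).

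I expect the local product structure estimate --- existence, uniqueness, and the uniform comparison $\max\bigl(d^s(z,u),d^u(u,p)\bigr)\le K\,d(z,p)$ --- to be the main obstacle, since it is where the transversality and H\"older regularity of the foliations must be controlled uniformly over the compact manifold. Once this standard Anosov fact is in hand, the rest is bookkeeping with triangle inequalities and an appropriate choice of $l_1,l_2$: the constants $K,\delta_0,\theta_0$ are absorbed into the index shift precisely because $\tau<1$ makes $\tau^{-l_i}$ arbitrarily large.
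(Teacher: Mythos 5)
Your argument is correct and complete. The paper does not actually prove this lemma --- it merely cites Lemma 5.10 of \cite{Wu} and invokes compactness of $M$ to make $l_1,l_2$ uniform in $z$ --- and what you have written (the triangle inequality together with $d\le d^{s},d^{u}$ along leaves for the outer inclusions, and the uniform local product structure $u=W^s_{\mathrm{loc}}(z)\cap W^u_{\mathrm{loc}}(p)$ with the Lipschitz comparison $\max\bigl(d^s(z,u),d^u(u,p)\bigr)\le K\,d(z,p)$ for the inner ones) is exactly the standard argument that citation stands for, with compactness entering in the same way to make $\theta_0$, $\delta_0$ and $K$ uniform.
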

\begin{proof}
See the proof of Lemma 5.10 in \cite{Wu}. Then we use compactness of $M$ to obtain the universal $l_1, l_2>0$ independent of $z$.
\end{proof}

\begin{lemma}\label{absolutecontinuous}
Let $z_1, z_2\in M$ be two nearby points. Then there exists a constant $C>1$ such that for nearby $w_1, w_2 \in W^u(z_1)$, one has
$$\frac{1}{C}d^u(w_1,w_2)\leq d^u(h^s_{z_1z_2}(w_1),h^s_{z_1z_2}(w_2)) \leq Cd^u(w_1,w_2)$$
where $h^s_{z_1z_2}$ is the holonomy map of foliation $W^s$ from $W^u(z_1)$ to $W^u(z_2)$.
\end{lemma}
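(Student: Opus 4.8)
The plan is to exploit the equivariance of the stable holonomy under the dynamics together with the conformality of $f$ on $E^u$. First I would record that $f$ carries stable leaves to stable leaves and unstable leaves to unstable leaves, so for each $n$ the holonomies satisfy the intertwining relation $f^n\circ h^s_{z_1z_2}=h^s_{f^n(z_1)f^n(z_2)}\circ f^n$ on a local unstable leaf. Writing $h=h^s_{z_1z_2}$ and $h_n=h^s_{f^n(z_1)f^n(z_2)}$, this gives $h=f^{-n}\circ h_n\circ f^n$, and hence, wherever the derivative makes sense,
\begin{equation*}
D_wh=\bigl(D_{h(w)}f^n|_{E^u}\bigr)^{-1}\circ D_{f^n(w)}h_n\circ D_wf^n|_{E^u}.
\end{equation*}
The role of conformality is that $D_xf^n|_{E^u}$ equals $c_n(x)$ times a linear isometry, where $c_n(x)=\prod_{k=0}^{n-1}a(f^k(x))$ and $a(x)=\|D_xf|_{E^u}\|$; consequently the isometric parts drop out in operator norm and for every $n$,
\begin{equation*}
\|D_wh\|=\frac{c_n(w)}{c_n(h(w))}\,\|D_{f^n(w)}h_n\|.
\end{equation*}

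Next I would control the two factors separately. Since $w$ and $h(w)$ lie on a common stable leaf, forward iteration contracts their distance exponentially, $d(f^k(w),f^k(h(w)))\le C\lambda^k d^s(w,h(w))$. Because $f$ is $C^2$ and $E^u$ is H\"older, the conformal factor $a$ (equivalently $\log a$) is H\"older, say of exponent $\theta'$, so
\begin{equation*}
\sum_{k=0}^\infty\bigl|\log a(f^k(w))-\log a(f^k(h(w)))\bigr|\le L\sum_{k=0}^\infty\bigl(C\lambda^k d^s(w,h(w))\bigr)^{\theta'}\le \frac{LC^{\theta'}}{1-\lambda^{\theta'}}\,d^s(w,h(w))^{\theta'}.
\end{equation*}
For $z_1,z_2$ nearby and $w$ in a fixed local leaf, $d^s(w,h(w))$ is uniformly bounded, so the infinite product $\prod_{k\ge0}a(f^k(w))/a(f^k(h(w)))$ converges to a value lying in $[1/C_0,C_0]$ for a constant $C_0$ depending only on $f$ and on the size of the neighborhood. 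At the same time, the stable distance $d^s(f^n(w),f^n(h(w)))$ over which $h_n$ slides near $f^n(w)$ tends to $0$, which forces $\|D_{f^n(w)}h_n\|\to1$. Combining the two factors in the norm identity yields $\|D_wh\|\in[1/C_0,C_0]$ uniformly.

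Finally I would pass from the pointwise derivative bound to the bi-Lipschitz inequality. Taking a geodesic $\gamma$ in $W^u(z_1)$ joining nearby $w_1,w_2$, its image $h\circ\gamma$ is a path in $W^u(z_2)$ of length at most $C_0\,d^u(w_1,w_2)$, which gives $d^u(h(w_1),h(w_2))\le C_0\,d^u(w_1,w_2)$; applying the same estimate to the inverse holonomy $h^{-1}=h^s_{z_2z_1}$ gives the reverse inequality, so the lemma holds with $C=C_0$.

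I expect the main obstacle to be the assertion $\|D_{f^n(w)}h_n\|\to1$, i.e. that the stable holonomy between two unstable leaves passing exponentially close to one another has derivative tending to the identity, together with the companion issue of knowing that $D_wh$ exists at all so that the telescoping norm identity is legitimate. For general Anosov systems the stable holonomy is merely H\"older, and this step is precisely where conformality is essential: it provides the $u$-bunching that upgrades the holonomy from H\"older to $C^1$ and makes the derivative governed by the convergent product of conformal-factor ratios above. I would make this rigorous by showing that the candidate derivatives obtained from the truncated products form a uniformly Cauchy family, using the H\"older continuity of $E^u$ and of $a$ and the local product structure, and by keeping all points inside a single local product chart throughout, so that the geometry of the leaves and the comparability of $d^s(w,h(w))$ with $d(z_1,z_2)$ are controlled.
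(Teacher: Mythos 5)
Your argument is correct and is essentially the paper's: the paper simply cites the proof of Corollary 19.1.11 in \cite{KH} to say that conformality of $f$ on $W^u$ makes the stable holonomy $C^1$ (hence locally bi-Lipschitz with uniform constants), and your telescoping identity $D_wh=\bigl(D_{h(w)}f^n|_{E^u}\bigr)^{-1}\circ D_{f^n(w)}h_n\circ D_wf^n|_{E^u}$ together with the convergent product of conformal-factor ratios is precisely the content of that cited argument. The issue you flag (existence of $D_wh$ and $\|D_{f^n(w)}h_n\|\to 1$) is handled there exactly as you propose, by showing the truncated products form a uniformly Cauchy family whose limit is the derivative.
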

\begin{proof}
Since $f|_{W^u}$ is conformal, it follows from a similar argument as in the proof of Corollary 19.1.11 in \cite{KH} that the holonomy map of $W^s$ is $C^1$. Then the lemma follows.
\end{proof}

Let $B(z,k,c):=\{w\in M: d(f^i(w), f^i(z)) \leq c \text{\ \ for\ } \forall 0\leq i \leq k-1\}$ be the Bowen ball centered at $z$. We have the following bounded distortion lemma:
\begin{lemma}\label{BDconformal}(Bounded distortion lemma)
For any $z_2 \in B(z_1,k,c)$, one has
$$\frac{1}{K} \leq \frac{\|D_{z_1}f^k|_{W^u}\|}{\|D_{z_2}f^k|_{W^u}\|} \leq K$$
for some $K=K(c)$, and $K \to 1$ as $c \to 0$.
\end{lemma}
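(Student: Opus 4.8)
The plan is to establish the distortion bound for the unstable derivative $\|D_xf^k|_{W^u}\|$ along a Bowen ball by summing the distortions accumulated at each single iterate and using H\"{o}lder continuity of the relevant objects. Since $f$ is conformal on unstable manifolds, $D_xf|_{E_x^u}$ is a scalar multiple of an isometry, so $\|D_xf|_{E_x^u}\|$ is a genuine scalar function of $x$, call it $\phi(x):=\|D_xf|_{E_x^u}\|$. By the chain rule and conformality, the iterate factors as a product
\begin{equation*}
\|D_{z}f^k|_{E_z^u}\| = \prod_{i=0}^{k-1}\phi(f^i(z)),
\end{equation*}
so that
\begin{equation*}
\log\frac{\|D_{z_1}f^k|_{W^u}\|}{\|D_{z_2}f^k|_{W^u}\|} = \sum_{i=0}^{k-1}\bigl(\log\phi(f^i(z_1))-\log\phi(f^i(z_2))\bigr).
\end{equation*}
The main task is to bound each summand geometrically.

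For this I would first record that $\phi$ is H\"{o}lder continuous: this is because $f$ is $C^2$ (so $Df$ is $C^1$, in particular Lipschitz) and the unstable distribution $E^u$ is H\"{o}lder continuous over $M$, whence $x\mapsto D_xf|_{E_x^u}$ is H\"{o}lder, and so is its norm $\phi$, which is moreover bounded away from $0$ on the compact manifold $M$ (as $\phi>\sigma_1>1$). Thus $\log\phi$ is H\"{o}lder continuous with some exponent $\gamma\in(0,1]$ and constant $L>0$:
\begin{equation*}
|\log\phi(p)-\log\phi(q)| \leq L\, d(p,q)^\gamma .
\end{equation*}
Next, for $z_2\in B(z_1,k,c)$ we have by definition $d(f^i(z_1),f^i(z_2))\leq c$ for all $0\leq i\leq k-1$. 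The crude estimate using only this gives a bound $kLc^\gamma$, which grows with $k$ and is therefore insufficient. The improvement comes from backward contraction along the orbit: I would argue that the distances $d(f^i(z_1),f^i(z_2))$ decay geometrically as $i$ decreases from $k-1$, since the two points track each other's forward orbits and the relevant comparison along $W^u$ contracts under backward iteration at a rate governed by $\sigma_1>1$.

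The key step, and the main obstacle, is to promote the uniform bound $d(f^i(z_1),f^i(z_2))\leq c$ into a geometrically decaying bound of the form $d(f^i(z_1),f^i(z_2))\leq C'c\,\sigma_1^{-(k-1-i)}$. Heuristically, two points whose forward orbits stay $c$-close for $k$ steps must lie close on a common local unstable-type comparison, so that pulling back from time $k-1$ to time $i$ contracts the separation by roughly $\sigma_1^{-(k-1-i)}$; this uses the local product structure and the expansion rate $\sigma_1$ in $E^u$ together with the contraction in $E^s$ provided by the hyperbolic splitting. Once this geometric decay is in hand, I would substitute it into the H\"{o}lder estimate to obtain
\begin{equation*}
\Bigl|\log\frac{\|D_{z_1}f^k|_{W^u}\|}{\|D_{z_2}f^k|_{W^u}\|}\Bigr| \leq L\sum_{i=0}^{k-1} d(f^i(z_1),f^i(z_2))^\gamma \leq L(C'c)^\gamma\sum_{j=0}^{\infty}\sigma_1^{-\gamma j} = \frac{L(C'c)^\gamma}{1-\sigma_1^{-\gamma}},
\end{equation*}
a bound that is uniform in $k$ and tends to $0$ as $c\to 0$. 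Setting $K=K(c):=\exp\bigl(L(C'c)^\gamma/(1-\sigma_1^{-\gamma})\bigr)$ then yields the claimed two-sided inequality with $K\to 1$ as $c\to 0$, completing the proof. I expect the care in justifying the geometric decay of $d(f^i(z_1),f^i(z_2))$ along the Bowen ball to be where the hyperbolicity and the H\"{o}lder regularity of the invariant foliations must be invoked most carefully; the summation and the passage $K\to 1$ are then routine.
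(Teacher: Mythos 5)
Your overall strategy is the same as the paper's: use conformality to write $\|D_zf^k|_{E^u}\|$ as a product of the single-step norms, take logarithms, invoke H\"{o}lder continuity of $\log\|D_zf|_{E^u}\|$ (which holds since $E^u$ is H\"{o}lder and $f$ is $C^2$), and then sum a geometrically decaying bound on $d(f^i(z_1),f^i(z_2))$ over the Bowen ball. However, the key geometric estimate you propose is false as stated. You claim $d(f^i(z_1),f^i(z_2))\leq C'c\,\sigma_1^{-(k-1-i)}$, i.e.\ decay as $i$ decreases from $k-1$, justified by the idea that the two points ``lie close on a common local unstable-type comparison.'' But a point of $B(z_1,k,c)$ need not lie near $W^u_{loc}(z_1)$: if $z_2\in W^s_{loc}(z_1)$ with $d(z_1,z_2)=c/2$, then $z_2\in B(z_1,k,c)$ for every $k$, yet $d(z_1,z_2)=c/2$ is nowhere near $C'c\,\sigma_1^{-(k-1)}$ for large $k$. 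The separation in a Bowen ball has both a stable and an unstable component, and these decay in opposite directions along the orbit segment.

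The correct statement, which is exactly what the paper cites (Proposition 6.4.16 in Katok--Hasselblatt), is the two-sided bound
\begin{equation*}
d(f^i(z_1),f^i(z_2))\;\leq\;2Cc\,\sigma^{\min(i,\,k-1-i)},\qquad 0\leq i\leq k-1,
\end{equation*}
with $\sigma=\max(\lambda,\sigma_1^{-1})<1$: the distance is exponentially small near \emph{both} ends of the orbit segment and may be of order $c$ only in the middle. This uses the local product structure and the contraction of $E^s$ under $f$ together with the contraction of $E^u$ under $f^{-1}$, not an unstable comparison alone. Once you substitute this bound, your summation argument goes through verbatim (the sum is now bounded by twice a geometric series, giving $\left|\log\frac{\|D_{z_1}f^k|_{W^u}\|}{\|D_{z_2}f^k|_{W^u}\|}\right|\leq \frac{2l(2Cc)^{\theta'}}{1-\sigma^{\theta'}}$), and the conclusion $K=K(c)\to 1$ as $c\to 0$ follows as you describe. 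So the gap is concrete but localized and repairable; the repair is precisely the step the paper takes.
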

\begin{proof}
Since $z\mapsto D_zf|_{W^u}$ is H\"{o}lder continuous, $\log \|D_zf|_{W^u}\|$ is as well, and thus there exist $l>0$, $0 < \theta' <1$ such that
$$\left|\log \|D_{z_1}f|_{W^u}\|-\log \|D_{z_2}f|_{W^u}\|\right| \leq l (d(z_1, z_2))^{\theta'}$$
for nearby $z_1$ and $z_2$. By Proposition 6.4.16 in \cite{KH}, one has
\begin{equation*}
d(f^i(z_1), f^i(z_2)) \leq 2Cc\sigma^{\min(i, k-1-i)}, \ \ \text{for\ \ } \forall 0 \leq i \leq k-1.
\end{equation*}
for some $\sigma\geq \max(\lambda, \sigma_1^{-1})$ and $C>0$. Thus,
\begin{equation*}
\begin{aligned}
\left|\log \frac{\|D_{z_1}f^k|_{W^u}\|}{\|D_{z_2}f^k|_{W^u}\|}\right| &\leq \sum_{i=0}^{k-1}\left|\log \|D_{f^i(z_1)}f|_{W^u}\|-\log \|D_{f^i(z_2)}f|_{W^u}\|\right|
 \\ &\leq \sum_{i=0}^{k-1}l(d(f^i(z_1), f^i(z_2)))^{\theta'}
\\&\leq \sum_{i=0}^{k-1}l\left(2Cc\sigma^{\min(i, k-1-i)}\right)^{\theta'} \\
&\leq \frac{2l(2Cc)^{\theta'}}{1-\sigma^{\theta'}}.
\end{aligned}
\end{equation*}
Hence $$\frac{1}{K} \leq \frac{\|D_{z_1}f^k|_{W^u}\|}{\|D_{z_2}f^k|_{W^u}\|} \leq K,$$
where $K=\exp(\frac{2l(2Cc)^{\theta'}}{1-\sigma^{\theta'}})$.
\end{proof}
We call the set
\begin{equation*}
\Pi(c) := \Pi(y, c):= \bigcup_{z\in B^s(y,c/2)} B^u(z, c/2)
\end{equation*}
an \emph{open $c$-rectangle} centered at $y$ ($c$ is very small). Note that $\Pi(y, c)=C_l(y)$ when $\tau^l=c/2$. Let $I_k(z,c)$ be the connected component of $f^{-k}(\Pi(c)) \cap W^u(z)$ on $W^u(z)$ containing $z$, where $k \geq 0$.

\begin{proof}[Proof of Theorem \ref{main1}]
Let $\alpha_0$ satisfy \eqref{e:winning} with $n=\dim E^u$. Let $l_0 \in \mathbb{N}$ be such that $\tau^{l_0} \leq \alpha_0 <\tau^{l_0-1}$. Let $\alpha=\tau^{l_0+2l_2+1}$ and pick an arbitrary $0 < \beta<1$. We consider $(\alpha,\beta)$-Schmidt games.

Let $\epsilon$ be as in Lemma \ref{Ne}. Choose $r \in \mathbb{N}$ large enough such that
\begin{equation} \label{e:ner1}
(1-\epsilon)^rN <1, \text{\ where\ } N=\lfloor \frac{\log 2+r\log(\frac{1}{\alpha\beta})}{\log\sigma_1}\rfloor+1.
\end{equation}
Fix $L>0$ small. Regardless of the initial moves of Bob, Alice can make arbitrary moves waiting until Bob chooses a ball of radius sufficiently small. Hence without loss of generality, we may assume that $B(\omega_1)$ with $\omega_1=(x_1,\rho)$ has radius small enough satisfying $\rho\leq\frac{L}{100}$.
Choose $c'>0$ small enough such that:
\begin{enumerate}
  \item $1 < K=K(c') \leq 1+\eta<2$ where $\eta >0$ is very small;
  \item for any $z\in \Pi(c')$, $B^u(z,L) \cap \Pi(c')$ has only one connected component which contains $z$;
  \item for any $z\in \Pi(c')$, $B^s(z,L) \cap \Pi(c')$ has only one connected component which contains $z$.
\end{enumerate}
Now choose $0 <c\ll c'$ such that:
\begin{equation}\label{e:c1}
c\leq \frac{\alpha c'(\alpha\beta)^{2r-1}}{100}
\end{equation}
and
\begin{equation}\label{e:j=01}
c<\frac{\alpha \rho(\alpha\beta)^{2r-1}}{C}
\end{equation}
where $C$ is from Lemma \ref{absolutecontinuous} when applied to any $z_1,z_2 \in B(\omega_1)$.
Note that the choice of $c$ depends heavily on $\rho$, i.e. the initial move of Bob. Suppose that $\tau^{m} \leq c/2< \tau^{m-1}$. Then $D_{m+l_1+l_2}(y) \subset B_{m+l_1}(y) \subset C_{m}(y) \subset \Pi(c)$ by Lemma \ref{sets}.

Now we describe a strategy for Alice to win $(\alpha, \beta)$-Schmidt games played on $M$ with target set $S=E(f, y)$. We claim that for each $j\in \mathbb{N}$, Alice can ensure that $x \notin f^{-k}(D_{m+l_1+l_2}(y))$ for any $x\in B(\omega'_{r(j+1)})$ and any $k$ with
\begin{equation}\label{e:somek}
\sup\{\text{diam\ }(I_k(z,c))\} > \frac{\alpha \rho(\alpha\beta)^{(j+2)r-1}}{C}
\end{equation}
where the supremum is taken over all $I_k(z,c)$'s with $z\in f^{-k}(D_{m+l_1+l_2}(y)) \cap D_{l+l_2}(x_{jr+1})$. Here $l=l(j)$ is such that $\tau^{l}\leq \rho(\alpha\beta)^{jr}<\tau^{l-1}$. Moreover, Alice can also ensure that $B(\omega'_{jr+1}) \subset D_{l+l_2}(x_{jr+1})$, which implies that $\bigcap_iB(\omega'_i) = \bigcap_jD_{l(j)+l_2}(x_{jr+1})$. This will imply that
$$\bigcap_iB(\omega'_i) \subset (\bigcup_{k}f^{-k}(D_{m+l_1+l_2}(y)))^c \subset E(f, y)$$
and finish the proof of the theorem. Indeed, if $z\in \bigcap_iB(\omega'_i) = \bigcap_jD_{l(j)+l_2}(x_{jr+1})$ and meanwhile $z\in f^{-k}(D_{m+l_1+l_2}(y))$ for some $k\geq 0$, there exists some $j$ such that \eqref{e:somek} is satisfied. However, Alice has ensured that $f^{-k}(D_{m+l_1+l_2}(y))\cap B(\omega'_{r(j+1)})=\emptyset$, which gives a contradiction.

We prove the claim by induction on $j$. At $j=0$ step, by \eqref{e:j=01} one has for any $k \in\mathbb{N}$,
$$\text{diam\ }(I_k(z,c)) \leq c < \frac{\alpha \rho(\alpha\beta)^{2r-1}}{C}.$$
So there is no $f^{-k}(D_{m+l_1+l_2}(y))$ for Alice to avoid. She just needs to ensure that $B(\omega'_{1}) \subset D_{l+l_2}(x_{1})$ where $l$ is such that $\tau^{l}\leq \rho<\tau^{l-1}$. But this is guaranteed by the choice of $\alpha$.

Assume the claim is true for $0, 1, \cdots, j-1$. Now we consider the $j$th step. Suppose that Bob already picked $B(\omega_{jr+1})$. In this step (containing $r$ turns of play), Alice only needs to avoid the $f^{-k}(D_{m+l_1+l_2}(y))$'s with $k$ satisfying
\begin{equation} \label{e: intersection1}
f^{-k}(D_{m+l_1+l_2}(y)) \cap D_{l+l_2}(x_{jr+1})  \neq \emptyset,
\end{equation}
where $l$ is such that $\tau^{l}\leq \rho(\alpha\beta)^{jr}<\tau^{l-1}$, and
\begin{equation} \label{e: mainequality1}
\frac{\alpha \rho(\alpha\beta)^{(j+2)r-1}}{C} < \sup\{\text{diam\ }(I_k(z,c))\} \leq \frac{\alpha \rho(\alpha\beta)^{(j+1)r-1}}{C}
\end{equation}
where the supremum is taken over all $I_k(z,c)$'s with $z\in f^{-k}(D_{m+l_1+l_2}(y)) \cap D_{l+l_2}(x_{jr+1})$.

The following lemma will allow us to apply bounded distortion Lemma \ref{BDconformal} in Lemma \ref{manyk1} below.
\begin{lemma} \label{bowenball}
Let $k_1 \leq k_2$ satisfy both \eqref{e: intersection1} and \eqref{e: mainequality1}. Suppose that
$$z_i \in f^{-k_i}(D_{m+l_1+l_2}(y)) \cap D_{l+l_2}(x_{jr+1}).$$
Moreover, assume that
$$\text{diam\ }(I_{k_1}(z_1,c))>\frac{\alpha \rho(\alpha\beta)^{(j+2)r-1}}{C}.$$
Then $z_2 \in B(z_1, k_1, c').$
\end{lemma}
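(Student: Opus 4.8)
The plan is to split the comparison of the two orbits into a stable part and an unstable part. Since $z_1$ and $z_2$ both lie in the box $D_{l+l_2}(x_{jr+1})$, the local product structure produces a unique point $w\in W^u_{loc}(z_1)\cap W^s_{loc}(z_2)$, and Lemma \ref{absolutecontinuous} (which makes the stable holonomy bi-Lipschitz) gives $d^u(z_1,w)\le C_0\tau^{l+l_2}$ and $d^s(z_2,w)\le C_0\tau^{l+l_2}$ for a uniform constant $C_0$. It then suffices to bound $d(f^i(z_1),f^i(w))$ and $d(f^i(w),f^i(z_2))$ by $c'/2$ for $0\le i\le k_1-1$ and add them. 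The stable half is immediate: $w$ and $z_2$ lie on a common local stable manifold, so forward iteration contracts their distance and $d(f^i(w),f^i(z_2))\le C_0'\lambda^i\tau^{l+l_2}$ for every $i\ge 0$; as $\tau^l\le\rho(\alpha\beta)^{jr}\le\rho$ and Alice may wait until Bob plays a sufficiently small $\rho$, this is at most $c'/2$.

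The unstable half is where the hypothesis on $\mathrm{diam}(I_{k_1}(z_1,c))$ and conformality enter. Write $I_{k_1}(z_1,c')$ for the connected component of $f^{-k_1}(\Pi(c'))\cap W^u(z_1)$ containing $z_1$. The map $f^{k_1}$ carries $I_{k_1}(z_1,c)$ (resp. $I_{k_1}(z_1,c')$) onto the unstable slice of $\Pi(c)$ (resp. $\Pi(c')$) through $f^{k_1}(z_1)$, of radius $c/2$ (resp. $c'/2$). Every point of either preimage disk lies in the Bowen ball $B(z_1,k_1,c')$: its forward images up to time $k_1$ are connected unstable sets inside $\Pi(c')$ that contract backward by the factor $\sigma_1$, so this closeness follows directly from the definition of the disks and is \emph{not} circular. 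Hence Lemma \ref{BDconformal} applies with distortion constant $K(c')<2$, and since $f$ is conformal on $W^u$ the single scalar $\|D_{z_1}f^{k_1}|_{W^u}\|$ governs the expansion of both disks. Comparing the two slice radii then shows that the intrinsic radius of $I_{k_1}(z_1,c')$ about $z_1$ is bounded below by a uniform constant times $(c'/c)\,\mathrm{diam}(I_{k_1}(z_1,c))$, with $\|D_{z_1}f^{k_1}|_{W^u}\|$ cancelling.

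Now I would feed in the quantitative bounds. The hypothesis $\mathrm{diam}(I_{k_1}(z_1,c))>\alpha\rho(\alpha\beta)^{(j+2)r-1}/C$ together with \eqref{e:c1}, which yields $c'/c\ge 100/(\alpha(\alpha\beta)^{2r-1})$, shows that the radius of $I_{k_1}(z_1,c')$ about $z_1$ is at least a fixed constant times $\rho(\alpha\beta)^{jr}$. On the other hand $d^u(z_1,w)\le C_0\tau^{l+l_2}\le C_0\tau^{l_2}\rho(\alpha\beta)^{jr}$, with $\tau^{l_2}<1$ fixed. As the constant in \eqref{e:c1} is arranged so that the lower bound dominates, one gets $w\in I_{k_1}(z_1,c')$, whence $f^{k_1}(w)\in\Pi(c')$. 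Backward contraction along $W^u$ (the images $f^i(I_{k_1}(z_1,c'))$ being unstable disks whose diameters shrink as $i$ decreases, together with the fact that $f^{k_1}(z_1)$ sits within $O(c)\ll c'$ of the centre of its slice) then gives $d(f^i(z_1),f^i(w))\le c'/2$ for all $0\le i\le k_1-1$. Combined with the stable estimate, the triangle inequality yields $z_2\in B(z_1,k_1,c')$.

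The main obstacle is precisely this unstable estimate, i.e.\ converting the purely geometric lower bound on $\mathrm{diam}(I_{k_1}(z_1,c))$ into the statement that $w$ falls inside the enlarged preimage disk $I_{k_1}(z_1,c')$. Two points need care: first, that conformality reduces the unstable expansion in every direction to one scalar, so that the ratio of the two preimage radii is (up to bounded distortion) exactly $c'/c$ and \emph{independent of the unknown return time $k_1$}; and second, that the constants in \eqref{e:c1} and \eqref{e:j=01} are chosen so that the three scales $c$, $c'$ and $\tau^{l+l_2}$ match across the inequalities, which is what lets the $\rho(\alpha\beta)^{jr}$ factors cancel. I would also note that the assumption $k_1\le k_2$ is used only to identify $k_1$ as the return time governing the relevant Bowen ball, so that controlling the orbits up to time $k_1$ is exactly what is needed.
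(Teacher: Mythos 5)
Your proposal follows essentially the same route as the paper's proof: you introduce the stable-holonomy point $w\in W^u_{loc}(z_1)\cap W^s_{loc}(z_2)$ (the paper's $h^s_{z_1}(z_2)$), use the bounded distortion Lemma \ref{BDconformal} together with conformality and \eqref{e:c1} to show that $I_{k_1}(z_1,c')$ is long enough that $f^{k_1}$ keeps a $B^u$-neighborhood of $z_1$ of radius comparable to $\rho(\alpha\beta)^{jr}$ inside $\Pi(c')$, and then combine backward contraction on $W^u$ with forward contraction on $W^s$ via the triangle inequality. The argument is correct; the only cosmetic difference is that the paper bounds the stable term by comparing it to $\mathrm{diam}\,(f^{k}(B^u(z_1,2\rho(\alpha\beta)^{jr})))$ rather than by appealing to the smallness of $\rho$, but the needed inequality $\rho(\alpha\beta)^{jr}\ll c'$ is forced by the unstable estimate in both versions.
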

\begin{proof}[Proof of Lemma \ref{bowenball}]
Recall the choice of $c$ in \eqref{e:c1} and the definition of $\Pi(c)$. It is clear that
\begin{equation*}
\text{diam\ }(f^{k_1}(I_{k_1}(z_1,c)))=c, \ \ \text{diam\ }(f^{k_1}(I_{k_1}(z_1,c')))=c'.
\end{equation*}
By \eqref{e:c1} and bounded distortion Lemma \ref{BDconformal} we have
\begin{equation*}
\frac{\text{diam\ }(I_{k_1}(z_1,c'))}{\text{diam\ }(I_{k_1}(z_1,c))} \geq \frac{c'}{cK}\geq \frac{100}{K\alpha(\alpha\beta)^{2r-1}}.
\end{equation*}
By assumption,
\begin{equation*}
\text{diam\ }(I_{k_1}(z_1,c)) > \frac{\alpha \rho(\alpha\beta)^{(j+2)r-1}}{C}.
\end{equation*}
If $C$ is sufficiently close to $1$, then
\begin{equation*}
\text{diam\ }(I_{k_1}(z_1,c'))> \frac{100}{K\alpha(\alpha\beta)^{2r-1}}\cdot \frac{\alpha \rho(\alpha\beta)^{(j+2)r-1}}{C}=\frac{100\rho(\alpha\beta)^{jr}}{CK}>50\rho(\alpha\beta)^{jr}.
\end{equation*}
This means that $f^{k_1}(B^u(z_1,10\rho(\alpha\beta)^{jr})) \subset \Pi(c')$. As $f$ expands $E^u$, this implies $\text{diam}(f^{k}(B^u(z_1,10\rho(\alpha\beta)^{jr})))\leq c'$ for any $0\leq k \leq k_1$.

We define the map $h^s_{z}(w):=h^s_{wz}(w)$ to be the projection of $w$ onto $W^u(z)$ along foliation $W^s$. It is clear that $h^s_{z_1}(z_2)\in B^u(z_1,2\rho(\alpha\beta)^{jr})$. Note that for any $0\leq k \leq k_1$
\begin{equation*}
\begin{aligned}
d(f^{k}(z_2),f^{k}(h^s_{z_1}(z_2)))&\leq d^s(f^{k}(z_2),f^{k}(h^s_{z_1}(z_2)))\\
&\leq \lambda^{k}d^s(z_2,h^s_{z_1}(z_2))\\
& \leq \lambda^{k}(2\rho(\alpha\beta)^{jr})\\
&\ll \text{diam}\left(f^{k}(B^u(z_1,2\rho(\alpha\beta)^{jr}))\right).
\end{aligned}
\end{equation*}
Thus
\begin{equation*}
\begin{aligned}
d(f^{k}(z_1),f^{k}(z_2)) &\leq d(f^{k}(z_1),f^{k}(h^s_{z_1}(z_2)))+d(f^{k}(h^s_{z_1}(z_2),f^{k}(z_2)))\\
&\leq  2\text{diam}\left(f^{k}(B^u(z_1,2\rho(\alpha\beta)^{jr}))\right)\leq c'.
\end{aligned}
\end{equation*}
Thus $z_2\in B(z_1,k_1,c')$. This proves the lemma.
\end{proof}

\begin{lemma}\label{onecomponent1}
For each $k$ satisfying \eqref{e: intersection1} and \eqref{e: mainequality1}, $f^{-k}(D_{m+l_1+l_2}(y)) \cap D_{l+l_2}(x_{jr+1})$ has at most one connected component.
\end{lemma}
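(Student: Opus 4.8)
The plan is to argue by contradiction. Suppose $f^{-k}(D_{m+l_1+l_2}(y)) \cap D_{l+l_2}(x_{jr+1})$ had two distinct connected components, $A_1\ni z_1$ and $A_2\ni z_2$; I will show that $z_1$ and $z_2$ are in fact joined inside this set. The first step reduces matters to a single unstable leaf. Both $D_{l+l_2}(x_{jr+1})$ and $D_{m+l_1+l_2}(y)$ are, by definition, unions of stable plaques over an unstable base, and since $f$ preserves $W^s$ the set $f^{-k}(D_{m+l_1+l_2}(y))$ is again saturated by stable plaques, now of stable radius at least of order $\lambda^{-k}\tau^{m+l_1+l_2}$. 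For the values of $k$ relevant here this radius dwarfs the stable size $\tau^{l+l_2}$ of the box, because the upper bound in \eqref{e: mainequality1} forces $\|D_{z_1}f^k|_{W^u}\|$, and hence $k$, to be large. Consequently the stable-holonomy projection $z_2':=h^s_{z_1}(z_2)$ of $z_2$ onto $W^u(z_1)$ lands on the same stable plaque of $f^{-k}(D_{m+l_1+l_2}(y))$ as $z_2$, and by the product structure of the box it remains in $D_{l+l_2}(x_{jr+1})$; the connecting stable segment lies in both sets, so $z_2'\in A_2$. It therefore suffices to connect $z_1$ and $z_2'$, both of which now lie on $W^u(z_1)$.

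Passing through $f^k$, this amounts to connecting $f^k(z_1)$ and $f^k(z_2')$ inside $P:=D_{m+l_1+l_2}(y)\cap W^u(f^k z_1)$. The heart of the argument is the estimate that $f^k(z_1)$ and $f^k(z_2')$ are unstably close: since $d^u(z_1,z_2')$ is at most the unstable size of the box and $\|D_{z_1}f^k|_{W^u}\|$ is comparable to $c/\mathrm{diam}(I_k(z_1,c))$ (using conformality and bounded distortion), the lower bound in \eqref{e: mainequality1} together with \eqref{e:c1} gives $d^u(f^k z_1, f^k z_2')<c'<L$. Bounded distortion (Lemma \ref{BDconformal}) is legitimate here precisely because Lemma \ref{bowenball} confines the relevant points to one Bowen ball $B(z_1,k,c')$. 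Now $f^k(z_1), f^k(z_2')\in D_{m+l_1+l_2}(y)\subset\Pi(c)\subset\Pi(c')$ and both lie in $B^u(f^k z_1,L)$; by property (2) in the choice of $c'$, the set $B^u(f^k z_1,L)\cap\Pi(c')$ is a single arc, so $W^u(f^k z_1)$ does not leave and re-enter the rectangle within this ball. Because $D_{m+l_1+l_2}(y)$ is the stable-thickening of the unstable ball $B^u(y,\tau^{m+l_1+l_2})$, the portion of this single arc lying in $D_{m+l_1+l_2}(y)$ is itself a single sub-arc, since the leaf meets the unstable base (via stable holonomy) in one piece. Hence $f^k(z_1)$ and $f^k(z_2')$ are joined inside $P$, and pulling back by $f^{-k}$ joins $z_1$ and $z_2'$ inside $f^{-k}(D_{m+l_1+l_2}(y))\cap W^u(z_1)$.

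It remains to see that this connecting unstable sub-arc stays inside the box $D_{l+l_2}(x_{jr+1})$, which would place $z_1$ and $z_2'$ in the same component and contradict $A_1\neq A_2$. Its $f^k$-image is contained in the component $f^k(I_k(z_1,c))$ of $\Pi(c)\cap W^u(f^k z_1)$, of diameter at most $c$, so the sub-arc itself has unstable diameter at most $\mathrm{diam}(I_k(z_1,c))$, which by the upper bound in \eqref{e: mainequality1} and the value $\alpha=\tau^{l_0+2l_2+1}$ is far smaller than $\tau^{l+l_2}$; since both endpoints already lie in the box, whose unstable extent along $W^u(z_1)$ is of order $\tau^{l+l_2}$, the slack supplied by the scale $l_2$ through Lemma \ref{sets} keeps the sub-arc in the box. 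I expect this last containment --- ruling out that the thin preimage plaque exits and re-enters $D_{l+l_2}(x_{jr+1})$, equivalently that $W^u(f^k z_1)$ folds back into $\Pi(c')$ --- to be the main obstacle; it is exactly what the smallness of $c$ against $c'$ in \eqref{e:c1} and the single-component properties (2)--(3) of $c'$ are designed to rule out.
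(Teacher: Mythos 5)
Your proposal follows essentially the same route as the paper's proof: both take the stable-holonomy point $w=h^s_{z_1}(z_2)$ onto $W^u(z_1)$, invoke the Bowen-ball containment and bounded-distortion estimates from Lemma \ref{bowenball} to get $d^u(f^k(w),f^k(z_1))\leq c'$ and $d^s(f^k(w),f^k(z_2))\leq c'$, and then use the single-component properties (2)--(3) in the choice of $c'$ to contradict the assumption of two components. The only real differences are that the paper phrases the contradiction as a distance dichotomy ($d^s(f^k(w),f^k(z_2))\leq c'$ versus $\geq L$) rather than building an explicit connecting arc, and that it never asserts your intermediate claim that $z_2'\in A_2$ --- a claim which, as justified by ``the stable plaques of $f^{-k}(D_{m+l_1+l_2}(y))$ are long because $k$ is large,'' is delicate both for the small values of $k$ that \eqref{e: mainequality1} still permits at early steps $j$ and when $f^k(z_2)$ sits near the stable boundary of its plaque; the paper sidesteps this by using only the stable-distance estimate.
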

\begin{proof}[Proof of Lemma \ref{onecomponent1}]
Assume the contrary. Let $z_1, z_2$ be points in two different connected components of $f^{-k}(D_{m+l_1+l_2}(y)) \cap D_{l+l_2}(x_{jr+1})$. Moreover, we let $z_1$ be such that
 $$\text{diam\ }(I_{k}(z_1,c))>\frac{\alpha \rho(\alpha\beta)^{(j+2)r-1}}{C}.$$
Then we can apply the proof of Lemma \ref{bowenball} here with $k_1=k_2=k$. There exists a unique point $w\in B^u(z_1)\cap B^s(z_2)$, i.e., $w=h^s_{z_1}(z_2)$ as defined in the proof of Lemma \ref{bowenball}. By the proof of Lemma \ref{bowenball}, $d^u(f^k(w),f^k(z_1)) \leq c'$ and  $d^s(f^k(w),f^k(z_2)) \leq c'$. But $f^k(z_2) \in \Pi(c)$, which implies that $d^s(f^k(w),f^k(z_2)) \geq L$ by the choice of $c'$ and $L$. We arrive at a contradiction.
\end{proof}
By Lemma \ref{onecomponent1}, the supremum in \eqref{e: mainequality1} is in fact taken over the unique connected component of $f^{-k}(D_{m+l_1+l_2}(y)) \cap D_{l+l_2}(x_{jr+1})$.
\begin{lemma} \label{manyk1}
There are at most $N$ many $k$'s satisfying both \eqref{e: intersection1} and \eqref{e: mainequality1}.
\end{lemma}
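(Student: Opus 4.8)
The plan is to bound the number of admissible $k$'s by controlling the spread $k_p-k_1$ between the smallest and largest such index. Suppose $k_1<k_2<\cdots<k_p$ are the distinct integers satisfying both \eqref{e: intersection1} and \eqref{e: mainequality1}. By Lemma \ref{onecomponent1} each $f^{-k_i}(D_{m+l_1+l_2}(y))\cap D_{l+l_2}(x_{jr+1})$ is a single connected component, and by the lower bound in \eqref{e: mainequality1} I may choose a point $z_i$ in the $k_i$-component with
$$\text{diam\ }(I_{k_i}(z_i,c)) > \frac{\alpha\rho(\alpha\beta)^{(j+2)r-1}}{C}.$$
In particular $z_1$ meets the diameter hypothesis of Lemma \ref{bowenball}, so applying that lemma to the pairs $(k_1,k_i)$ yields $z_i \in B(z_1,k_1,c')$ for every $i$. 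This is the crucial reduction: all the points now share a common orbit segment of length $k_1$ inside a $c'$-neighborhood, which is exactly what is needed to invoke the bounded distortion Lemma \ref{BDconformal}.

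Next I would convert diameters into expansion rates using conformality on $W^u$. Since $f^{k}$ is conformal along the unstable leaf and $\text{diam}(f^{k}(I_k(z,c)))=c$, the mean value inequality together with Lemma \ref{BDconformal} (valid because the forward iterates $f^j(I_k(z,c))$, $0\le j\le k$, have diameter $\le c\le c'$) gives
$$\text{diam\ }(I_k(z,c)) = \frac{c}{\|D_zf^k|_{W^u}\|}\cdot D_k, \qquad D_k\in[K^{-1},K].$$
Using $\|D_{z_p}f^{k_p}|_{W^u}\| = \|D_{f^{k_1}(z_p)}f^{k_p-k_1}|_{W^u}\|\cdot\|D_{z_p}f^{k_1}|_{W^u}\|\ge \sigma_1^{k_p-k_1}\|D_{z_p}f^{k_1}|_{W^u}\|$ and the distortion estimate $\|D_{z_1}f^{k_1}|_{W^u}\|/\|D_{z_p}f^{k_1}|_{W^u}\|\in[K^{-1},K]$ coming from $z_p\in B(z_1,k_1,c')$, I obtain
$$\frac{\text{diam\ }(I_{k_1}(z_1,c))}{\text{diam\ }(I_{k_p}(z_p,c))} \ \geq\ \frac{\sigma_1^{k_p-k_1}}{K^{3}}.$$
On the other hand, both diameters lie in the interval $\big(\tfrac{\alpha\rho(\alpha\beta)^{(j+2)r-1}}{C},\,\tfrac{\alpha\rho(\alpha\beta)^{(j+1)r-1}}{C}\big]$ whose endpoint ratio is $(\alpha\beta)^{-r}$, so the left-hand ratio is at most $(\alpha\beta)^{-r}$.

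Combining the two bounds gives $\sigma_1^{k_p-k_1}\le K^{3}(\alpha\beta)^{-r}$, and taking logarithms,
$$(k_p-k_1)\log\sigma_1 \ \leq\ 3\log K + r\log\tfrac{1}{\alpha\beta} \ \leq\ \log 2 + r\log\tfrac{1}{\alpha\beta},$$
where the last inequality uses the freedom, built into the choice of $c'$ in condition (1) there, to take $K=K(c')$ so close to $1$ that $3\log K\le\log 2$. Since $k_p-k_1$ is an integer, this forces $k_p-k_1\le N-1$ with $N$ as in \eqref{e:ner1}, and hence the number of admissible $k$'s is at most $k_p-k_1+1\le N$. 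I expect the only delicate point to be the bookkeeping of the distortion constants: one must verify that the accumulated powers of $K$ are absorbed by the fixed slack $\log 2$ in the definition of $N$, which is precisely why conformality on $W^u$ (making $\text{diam}(I_k)$ genuinely comparable to a single scalar rate $\|D_zf^k|_{W^u}\|^{-1}$) and the ability to shrink $K\to1$ are both indispensable.
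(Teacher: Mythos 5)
Your proof is correct and follows essentially the same route as the paper: bound $k_{\max}-k_{\min}$ by playing the uniform expansion rate $\sigma_1^{k_2-k_1}$ (controlled via Lemma \ref{bowenball} and the bounded distortion Lemma \ref{BDconformal}) against the ratio $(\alpha\beta)^{-r}$ of the endpoints of the diameter window in \eqref{e: mainequality1}. The only cosmetic difference is that you pass through the scalar derivative $\|D_zf^k|_{W^u}\|$ and accumulate $K^{3}$ where the paper compares $f^{k_1}$-images of the components directly and gets a single factor of $K$; both are absorbed by the $\log 2$ slack in the definition of $N$ since $K=K(c')$ can be taken arbitrarily close to $1$.
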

\begin{proof}[Proof of Lemma \ref{manyk1}]
Let $k_1$ and $k_2$ be the minimal and the maximal ones respectively among all $k$'s satisfying both \eqref{e: intersection1} and \eqref{e: mainequality1}.

For any $w\in f^{-k_2}(D_{m+l_1+l_2}(y)) \cap D_{l+l_2}(x_{jr+1})$ one has
\begin{equation} \label{e: k1/k21}
\text{diam\ }(f^{k_1}(I_{k_2}(w,c))) \leq \frac{\text{diam\ }(f^{k_2}(I_{k_2}(w,c)))}{(\sigma_1)^{k_2-k_1}}= \frac{c}{(\sigma_1)^{k_2-k_1}}.
\end{equation}
Lemma \ref{bowenball} allows us to apply bounded distortion Lemma \ref{BDconformal} to get
\begin{equation} \label{e: k2//k11}
\frac{\text{diam\ }(I_{k_2}(w,c))}{\text{diam\ }(I_{k_1}(z,c))} \leq \frac{K\text{diam\ }(f^{k_1}(I_{k_2}(w,c)))}{\text{diam\ }(f^{k_1}(I_{k_1}(z,c)))} \leq \frac{K}{(\sigma_1)^{k_2-k_1}}
\end{equation}
where the latter inequality follows from \eqref{e: k1/k21} and $\text{diam\ }\left(f^{k_1}(I_{k_1}(z,c))\right)=c$. Combining \eqref{e: mainequality1}, \eqref{e: k2//k11} and bounded distortion Lemma \ref{BDconformal}, one has
\begin{equation*}
\begin{aligned}
\frac{\alpha \rho(\alpha\beta)^{(j+2)r-1}}{C} &< \text{diam\ }(I_{k_2}(w,c))  \\
&\leq \frac{K}{(\sigma_1)^{k_2-k_1}}\text{diam\ }(I_{k_1}(z,c)) \\
&\leq  \frac{K}{(\sigma_1)^{k_2-k_1}}\cdot\frac{\alpha \rho(\alpha\beta)^{(j+1)r-1}}{C},
\end{aligned}
\end{equation*}
which implies
\begin{equation*}
(\sigma_1)^{k_2-k_1} \leq \frac{K}{(\alpha\beta)^r}.
\end{equation*}
Hence
\begin{equation*}
k_2-k_1 \leq \lfloor \frac{\log K+r\log(\frac{1}{\alpha\beta})}{\log\sigma_1}\rfloor \leq N-1
\end{equation*}
which finishes the proof of the lemma.
\end{proof}
Now we project all $f^{-k}(D_{m+l_1+l_2}(y)) \cap D_{l+l_2}(x_{jr+1})$ satisfying both \eqref{e: intersection1} and \eqref{e: mainequality1} onto $B^u(x_{jr+1}, \tau^{l+l_2})$ along the foliation $W^s$. By Lemma \ref{absolutecontinuous} and \eqref{e: mainequality1}, the projection of each of them is contained in a ball of diameter at most
$$\alpha \rho(\alpha\beta)^{(j+1)r-1} < \alpha\tau^{\tilde{l}-1}< \tau^{l_0+\tilde{l}+l_2}\leq \alpha_0\tau^{\tilde{l}+l_2}$$
where $\tilde{l}$ is such that $\tau^{\tilde{l}} \leq \rho(\alpha\beta)^{(j+1)r-1}<\tau^{\tilde{l}-1}$.
Note that there are at most $N$ many such sets. Alice can apply Lemma \ref{Ne} to choose $B^u(x'_{jr+1},\tau^{l+l_2}\alpha_0)\subset B^u(x_{jr+1}, \tau^{l+l_2})$ to avoid at least $\lceil \epsilon N \rceil$ of the above sets. It is clear that the set
$$\bigcup_{z\in B^u(x'_{jr+1},\tau^{l+l_2}\alpha_0)}B^s(z,\tau^{l+l_2}\alpha_0)$$
has empty intersection with at least $\lceil \epsilon N \rceil$ connected components. Let Alice choose the ball $$B(x'_{jr+1},\rho(\alpha\beta)^{jr}\alpha) \subset D_{l+l_2+l_0}(x'_{jr+1}) \subset \bigcup_{z\in B^u(x'_{jr+1},\tau^{l+l_2}\alpha_0)}B^s(z,\tau^{l+l_2}\alpha_0).$$
Then $B(x'_{jr+1},\rho(\alpha\beta)^{jr}\alpha)$ has empty intersection with at least $\lceil \epsilon N \rceil$ connected components. Moreover, $B(x'_{jr+1},\rho(\alpha\beta)^{jr}\alpha) \subset D_{l+l_2}(x_{jr+1})$. Repeat the above argument $r$ times. Since $N(1-\epsilon)^r <1$ by \eqref{e:ner1}, Alice can avoid all the $f^{-k}(D_{m+l_1+l_2}(y))$'s satisfying both \eqref{e: intersection1} and \eqref{e: mainequality1} after $r$ turns of play. This proves the claim. Continue with the induction and the theorem follows.
\end{proof}

Due to the stability of winning property under countable intersections, we have:
\begin{corollary}\label{corollary}
Let $\{f_i\}_{i=1}^N$ be a finite set of $C^2$-Anosov diffeomorphisms with conformality on unstable manifolds, as in Theorem \ref{main1}. And let $Y$ be a countable subset of $M$. Then $\cap_{i=1}^N E(f_i,Y)$ is winning for Schmidt games played on $M$.
\end{corollary}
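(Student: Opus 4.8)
The plan is to realize $\bigcap_{i=1}^N E(f_i,Y)$ as a countable intersection of sets each already handled by Theorem \ref{main1}, and then invoke the countable-intersection stability of winning sets in Proposition \ref{winpro}(2). First I would unwind the definitions: directly from the definition of $E(f,Y)$ one has, for each fixed $i$,
\begin{equation*}
E(f_i,Y)=\bigcap_{y\in Y}E(f_i,y),
\end{equation*}
since $Y\cap \overline{\{f_i^k(z)\}}=\emptyset$ is equivalent to $y\notin \overline{\{f_i^k(z)\}}$ for every $y\in Y$. Consequently
\begin{equation*}
\bigcap_{i=1}^N E(f_i,Y)=\bigcap_{i=1}^N\bigcap_{y\in Y}E(f_i,y),
\end{equation*}
which is an intersection indexed by the countable set $\{1,\dots,N\}\times Y$. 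By Theorem \ref{main1}, every one of the sets $E(f_i,y)$ is winning.

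The hard point is that Proposition \ref{winpro}(2) preserves winning only for a common ratio $\alpha$, whereas a priori distinct $f_i$ furnish distinct winning ratios $\alpha_i$, and $\alpha$-winning is not automatically inherited by smaller ratios. So the main task is to produce a single $\alpha$ that works simultaneously for all the $E(f_i,y)$. To this end I would revisit the proof of Theorem \ref{main1} and observe two things. First, the ratio produced there, $\alpha=\tau^{l_0+2l_2+1}$, does not depend on the target point $y$: the point $y$ enters only through the sets $D_{m+l_1+l_2}(y)$ that Alice avoids and through the target $S=E(f,y)$, never through the choice of $\alpha$. Thus for each fixed $f_i$ there is a ratio $\alpha_i$, independent of $y$, for which every $E(f_i,y)$ is $\alpha_i$-winning. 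Second, and crucially, the threshold $\alpha_0$ in \eqref{e:winning} may be taken to be any value in $(0,\tfrac12(1/CD)^{1/n})$, the constant $\epsilon=\epsilon(C,D)$ of Lemma \ref{Ne} is uniform over this range, and \eqref{e:ner1} can be met by taking $r$ large for any such $\alpha$. Letting $\alpha_0\to 0$ forces $l_0\to\infty$, so running the argument for $f_i$ with every admissible $\alpha_0$ shows that $E(f_i,y)$ is in fact $\tau^{L}$-winning for every integer $L\ge L_i$, where $L_i$ is a threshold depending only on $f_i$ (through $C,D,\dim E^u$ and $l_2$). Fixing one common $\tau\in(0,1)$ for all the finitely many instances of the argument, this gives, for each $i$, a down-set $\{\tau^L:L\ge L_i\}$ of admissible winning ratios.

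With this in hand the corollary follows. Set $\alpha:=\tau^{\max_{1\le i\le N}L_i}$; since $\max_i L_i\ge L_i$ for every $i$, each $E(f_i,y)$ with $1\le i\le N$ and $y\in Y$ is $\alpha$-winning for this single ratio. Proposition \ref{winpro}(2) then shows that the countable intersection $\bigcap_{i=1}^N\bigcap_{y\in Y}E(f_i,y)=\bigcap_{i=1}^N E(f_i,Y)$ is $\alpha$-winning, hence winning for Schmidt games played on $M$, as claimed. I expect the only nonroutine step to be the verification that the proof of Theorem \ref{main1} indeed yields an entire down-set of winning ratios for each $f_i$ (equivalently, that shrinking $\alpha_0$ leaves Lemma \ref{Ne} and \eqref{e:ner1} intact); once that is checked, the reduction to a common $\alpha$ and the appeal to Proposition \ref{winpro}(2) are immediate.
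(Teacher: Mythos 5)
Your proposal is correct and follows essentially the same route as the paper: write $\bigcap_{i=1}^N E(f_i,Y)$ as the countable intersection $\bigcap_{i}\bigcap_{y\in Y}E(f_i,y)$, reduce all of these sets to a single common winning ratio $\alpha$, and invoke Proposition \ref{winpro}(2). The only divergence is in how that common ratio is justified: the paper simply sets $\alpha:=\min\{\alpha_1,\dots,\alpha_N\}$, implicitly relying on the classical monotonicity of the winning property in $\alpha$, whereas you re-run the proof of Theorem \ref{main1} with smaller admissible $\alpha_0$ to produce the common ratio directly --- a more self-contained but equivalent justification of the same step.
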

\begin{proof}
For each diffeomorphism $f_i$, $E(f_i,y)$ is $\alpha_i$-winning for Schmidt games where $\alpha_i$ depends on $l_2$ in Lemma \ref{sets} according to the proof of Theorem \ref{main1}. Thus $\cap_{i=1}^N E(f_i,y)$ is $\alpha$-winning where $\alpha:=\min\{\alpha_1, \cdots, \alpha_N\}$, for any $y \in M$. By (2) of Proposition \ref{winpro}, $\cap_{i=1}^N E(f_i,Y)$ is $\alpha$-winning for Schmidt games.
\end{proof}

In particular, Corollary \ref{corollary} implies that if $f$ is a $C^2$-Anosov diffeomorphism with conformality both on unstable and stable manifolds, then $E(f,y)\cap E(f^{-1},y)$ is winning for Schmidt games played on $M$, and hence so is the set of points with non-dense \emph{complete} orbit.

Since the parameter $\alpha$ of the winning set $E(f,Y)$ for Schmidt games depends on the diffeomorphism $f$ itself in Theorem \ref{main1}, we don't know whether Corollary \ref{corollary} holds true for a countable set of $C^2$-Anosov diffeomorphisms with conformality on unstable manifolds.

\section{Proof of Theorem \ref{main2}}
In this section we prove Theorem \ref{main2}. Let $f: M\to M$ be a $C^{1+\theta}$-expanding endomorphism. Then $f$ is a local homeomorphism, i.e., there exists $\rho_0>0$ such that for any $z\in M$, $f|_{B(z,\rho_0)}$ is a homeomorphism onto its image. In this section, we suppose that $f$ is conformal. We also suppose that $1< \sigma_1\leq \|D_zf\| \leq \sigma_2$ for any $z\in M$. The following version of bounded distortion lemma is also standard:
\begin{lemma}\label{BDconformal2}(Bounded distortion lemma, cf. Theorem 3.2 in \cite{U})
Let $f: M\to M$ be a $C^{1+\theta}$-expanding endomorphism and suppose that $f$ is conformal. For any $z_2 \in B(z_1,k,c)$, one has
$$\frac{1}{K} \leq \frac{\|D_{z_1}f^k\|}{\|D_{z_2}f^k\|} \leq K$$
for some $K=K(c)$, and $K \to 1$ as $c \to 0$.
\end{lemma}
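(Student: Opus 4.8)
The plan is to follow the proof of the Anosov bounded-distortion Lemma \ref{BDconformal} above, with the argument simplifying considerably since here $f$ is conformal everywhere and there is no stable direction to account for. The proof rests on three ingredients: an exact multiplicativity of the operator norm coming from conformality, the H\"{o}lder regularity of $z\mapsto \log\|D_zf\|$ coming from the $C^{1+\theta}$ hypothesis, and a one-sided geometric decay of the orbit distances coming from expansion.

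First I would use conformality to turn the ratio in the statement into a telescoping sum. Since $D_zf$ is a scalar multiple of an isometry for every $z$, so is $D_zf^k$ by the chain rule, and its norm is precisely the product of the pointwise scaling factors; hence
$$\log\|D_zf^k\|=\sum_{i=0}^{k-1}\log\|D_{f^i(z)}f\|.$$
Next, the $C^{1+\theta}$ regularity makes $z\mapsto D_zf$ H\"{o}lder, and since $f$ is expanding the norm is bounded away from zero, so $z\mapsto \log\|D_zf\|$ is H\"{o}lder: there exist $l>0$ and $0<\theta'\leq\theta$ with
$$\left|\log\|D_{z_1}f\|-\log\|D_{z_2}f\|\right|\leq l\,(d(z_1,z_2))^{\theta'}$$
for nearby points.

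The geometric input is the decay of the orbit distances toward the initial time. Wherever two points lie within distance $c<\rho_0$, the map expands by a factor at least $\sigma_1$ (for $c$ small; up to shrinking $c$ the infinitesimal bound $\|D_zf\|\geq\sigma_1$ globalizes by conformality and continuity of $Df$), and the Bowen-ball condition $z_2\in B(z_1,k,c)$ guarantees exactly that $d(f^i(z_1),f^i(z_2))\leq c$ for all $0\leq i\leq k-1$. Running the expansion backwards from time $k-1$ then yields
$$d(f^i(z_1),f^i(z_2))\leq c\,\sigma_1^{-(k-1-i)},\qquad 0\leq i\leq k-1.$$
This is the endomorphism analogue of the shadowing estimate (Proposition 6.4.16 in \cite{KH}) used in the Anosov case; it replaces the two-sided bound $\sigma^{\min(i,k-1-i)}$ by a one-sided geometric decay, which is all that is needed here.

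Finally I would assemble the three pieces. Combining the telescoping identity, the H\"{o}lder bound, and the geometric decay gives
$$\left|\log\frac{\|D_{z_1}f^k\|}{\|D_{z_2}f^k\|}\right|\leq\sum_{i=0}^{k-1}l\,(d(f^i(z_1),f^i(z_2)))^{\theta'}\leq l\,c^{\theta'}\sum_{j=0}^{\infty}\sigma_1^{-\theta' j}=\frac{l\,c^{\theta'}}{1-\sigma_1^{-\theta'}},$$
so one may take $K=K(c)=\exp\!\left(\frac{l\,c^{\theta'}}{1-\sigma_1^{-\theta'}}\right)$, which tends to $1$ as $c\to 0$. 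The only delicate point is justifying the backward geometric decay, i.e.\ that the local expansion estimate applies at \emph{every} step of the orbit segment; this is exactly what the Bowen-ball condition secures, by keeping all iterates within $c<\rho_0$, the scale on which the infinitesimal expansion globalizes. I would also emphasize that conformality is used essentially to obtain the exact multiplicativity of the norm: for a general expanding map one would only have sub/supermultiplicativity of $\|D_zf^k\|$ and would instead have to run the argument with Jacobians, which is precisely the route taken in Lemma \ref{bd}.
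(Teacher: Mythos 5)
Your proposal is correct, and it is essentially the approach the paper intends: the paper gives no proof of Lemma \ref{BDconformal2} (it simply cites Theorem 3.2 of \cite{U} as standard), but your argument is precisely the adaptation of the paper's own proof of the Anosov version, Lemma \ref{BDconformal}, with conformality supplying the exact multiplicativity $\log\|D_zf^k\|=\sum_{i=0}^{k-1}\log\|D_{f^i(z)}f\|$ and the two-sided estimate $\sigma^{\min(i,k-1-i)}$ correctly replaced by the one-sided backward decay $d(f^i(z_1),f^i(z_2))\leq c\,\sigma_1^{-(k-1-i)}$. You also rightly flag the only delicate point, namely that the Bowen-ball condition keeps all iterates within the scale on which the infinitesimal expansion globalizes, so no further comment is needed.
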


\begin{proof}[Proof of Theorem \ref{main2}]
By Lemma \ref{potential}, it suffices to prove that $E(f,y)$ is $0$-dimensionally $(\beta, \gamma)$-potential winning for any $\beta \in (0,1)$ and $\gamma > 0$. Choose $r \in \mathbb{N}$ large enough such that
\begin{equation} \label{e:ner}
N\beta^{(r-1)\gamma}\leq 1
\end{equation}
where $N=\lfloor \frac{\log 2+r\log(\frac{1}{\beta})}{\log\sigma_1}\rfloor+1$. Fix $c'>0$ to be very small such that
\begin{enumerate}
  \item $1 < K=K(c') \leq 1+\eta<2$ where $\eta >0$ is very small,
  \item $f|_{B(z,c')}$ is injective for any $z\in M$.
\end{enumerate}
Regardless of the initial moves of Bob, Alice can make empty moves waiting until Bob chooses a ball of radius sufficiently small. Hence without loss of generality, we may assume that $B(\omega_1)$ has radius $\rho_1\leq \frac{c'}{100}$. Without loss of generality, we assume that Bob will play so that $\rho_i \to 0$ (cf. Remark 3.2 in \cite{KW3}). Now choose $0 <c\ll c'$ such that:
\begin{equation}\label{e:c}
c\leq \frac{c'\beta^{2r}}{100K}
\end{equation}
and
\begin{equation}\label{e:j=0}
c<\rho_1\beta^{2r}.
\end{equation}
Note that the choice of $c$ depends heavily on $\rho_1$, i.e. the initial move of Bob. If $f^k(z)\in B(y,c)$, we let $I_k(z,c)$ be the connected component of $f^{-k}(B(y,c))$ which contains $z$. Let $I_k(c)$ mean any one of the connected components of $f^{-k}(B(y,c))$.

Now we describe a strategy for Alice to win the $0$-dimensionally $(\beta, \gamma)$-potential games on $M$ with target set $S=E(f, y)$. Suppose that each step contains $r$ turns of play. Let $j\in \mathbb{N}$. Suppose that Bob has chosen a ball $B_{jr+1}$ of radius $\rho_{jr+1}$ at $(jr+1)$th turn of play, i.e. at the beginning of $j$th step. Then we can let Alice choose all the balls of radius $\text{diam\ }(I_k(c))$ containing $I_k(c)$ which satisfies:
\begin{equation} \label{e: mainequality}
\rho_{jr+1}\beta^{2r}\leq \text{diam\ }(I_k(c)) < \rho_{jr+1}\beta^{r}
\end{equation}
and
\begin{equation} \label{e: intersection}
I_k(c) \cap B_{jr+1} \neq \emptyset
\end{equation}
at $(jr+1)$th turn of play and let Alice just make empty moves at the remaining $(r-1)$ turns of play at $j$th step. Note that by \eqref{e:j=0} one has for any $k \in\mathbb{N}$,
$$\text{diam\ }(I_k(c)) \leq c < \rho_1\beta^{2r}.$$
So Alice just makes empty moves at $j=0$ step. For other $j\in \mathbb{N}$, let us check condition \eqref{e:potentialcondition} to guarantee that Alice's move is legal at $(jr+1)$th turn of play.

\begin{lemma}\label{onek}
For each $k$, there exists at most one $I_k(c)$ satisfying both \eqref{e: mainequality} and \eqref{e: intersection}.
\end{lemma}

\begin{proof}[Proof of Lemma \ref{onek}]
Assume that $I_k(c), I'_k(c)$ are two different connected components satisfying both \eqref{e: mainequality} and \eqref{e: intersection}. Then both $I_k(c)$ and $I'_k(c)$ are contained in the ball $B_{jr+1}'$ of radius $\rho_{jr+1}(1+\beta^{r})$ and concentric with $B_{jr+1}$. By bounded distortion Lemma \ref{BDconformal2} and \eqref{e:c},
$$\frac{\text{diam\ }(I_k(c'))}{\text{diam\ }(I_k(c))} \geq \frac{c'}{Kc}\geq \frac{100}{\beta^{2r}},$$
where $I_k(c')$ is the connected components of $f^{-k}(B(y,c')$ containing $I_k(c)$. Combining the above and \eqref{e: mainequality}, we have
$$\text{diam\ }(I_k(c'))\geq \frac{100}{\beta^{2r}}\cdot \rho_{jr+1}\beta^{2r}=100\rho_{jr+1}.$$
This implies that $B_{jr+1}' \subset f^{-k}(B(y,c'))$. In other words, $f^k(B'_{jr+1}) \subset B(y,c')$. By the choice of $c'$, we know that it is impossible that $f^k(I_k(c))=f^k(I'_k(c))$. We arrive at a contradiction.
\end{proof}

\begin{lemma} \label{manyk}
There are at most $N$ many $k$'s satisfying both \eqref{e: mainequality} and \eqref{e: intersection}.
\end{lemma}
\begin{proof}[Proof of Lemma \ref{manyk}]
Let $k_1$ and $k_2$ be the minimal and the maximal ones respectively among all $k$'s satisfying both \eqref{e: mainequality} and \eqref{e: intersection}. Then $k_1 \leq k_2$. It is clear that
\begin{equation} \label{e: k1k2}
\text{diam\ }(f^{k_1}(I_{k_1}(c))) = \text{diam\ }(f^{k_2}(I_{k_2}(c)))=c,
\end{equation}
and
\begin{equation} \label{e: k1/k2}
\text{diam\ }(f^{k_1}(I_{k_2}(c))) \leq \frac{\text{diam\ }(f^{k_2}(I_{k_2}(c)))}{(\sigma_1)^{k_2-k_1}}= \frac{c}{(\sigma_1)^{k_2-k_1}}.
\end{equation}
Moreover, by the proof of Lemma \ref{onek} we have $B_{jr+1}' \subset f^{-k_1}(B(y,c'))$. As $I_{k_2}(c) \subset B_{jr+1}'$, one has $f^{k_1}(I_{k_2}(c)) \subset B(y,c')$. Thus we can apply bounded distortion Lemma \ref{BDconformal2} to get
\begin{equation} \label{e: k2//k1}
\frac{\text{diam\ }(I_{k_2}(c))}{\text{diam\ }(I_{k_1}(c))} \leq \frac{K\text{diam\ }(f^{k_1}(I_{k_2}(c)))}{\text{diam\ }(f^{k_1}(I_{k_1}(c)))} \leq \frac{K}{(\sigma_1)^{k_2-k_1}}
\end{equation}
where the latter inequality follows from \eqref{e: k1k2} and \eqref{e: k1/k2}. Combining \eqref{e: mainequality} and \eqref{e: k2//k1}, one has

\begin{equation*}
\begin{aligned}
\rho_{jr+1}\beta^{2r} &\leq \text{diam\ }(I_{k_2}(c))  \\
&\leq \frac{K}{(\sigma_1)^{k_2-k_1}}\text{diam\ }(I_{k_1}(c)) \\
&\leq  \frac{K}{(\sigma_1)^{k_2-k_1}}\rho_{jr+1}\beta^{r},
\end{aligned}
\end{equation*}
which implies:
\begin{equation*}
(\sigma_1)^{k_2-k_1} \leq \frac{K}{\beta^r}.
\end{equation*}
Hence
\begin{equation*}
k_2-k_1 \leq \lfloor \frac{\log K+r\log(\frac{1}{\beta})}{\log\sigma_1}\rfloor \leq N-1
\end{equation*}
which finishes the proof of the lemma.
\end{proof}
By \eqref{e:ner} and \eqref{e: mainequality}, we have
$$\sum_{k=1}^\infty\rho^\gamma_{jr+1,k}\leq N(\rho_{jr+1}\beta^{r})^\gamma \leq (\beta\rho_{jr+1})^\gamma$$
which verifies \eqref{e:potentialcondition}. Now let $z\in \cap_{i=1}^\infty B_i$ and suppose that $f^k(z) \in B(y,c)$ for some $k\geq 0$. Then $z$ is contained in some $I_k(c)$. Since $\rho_{i+1} \geq \beta\rho_i$, we have that $\rho_{jr+1}\beta^{2r} \leq \rho_{(j+1)r+1}\beta^r$. This means that the union of Alice's choices, $\bigcup_{i=1}^\infty\bigcup_{k=1}^\infty L_{i,k}^{(\rho_{i,k})}$ must contain all possible $I_k(c)$'s where $z$ lies in. Thus $z\in \bigcup_{i=1}^\infty\bigcup_{k=1}^\infty L_{i,k}^{(\rho_{i,k})}$. This implies
$$\bigcap_{i=1}^\infty B_i \cap\left(E(f,y)\cup \bigcup_{i=1}^\infty\bigcup_{k=1}^\infty L_{i,k}^{(\rho_{i,k})}\right) \neq \emptyset.$$
So Alice wins the game.
\end{proof}

\begin{proof}[Proof of Corollary \ref{coro2}]
It follows from Theorem \ref{main2} and (2) of Proposition \ref{winningproperty}.
\end{proof}
\section{Proof of Theorem \ref{main3}}
In this section, we let $f: M\to M$ be a $C^{1+\theta}$-expanding endomorphism and prove Theorem \ref{main3}. Suppose that $1< \sigma_1\leq \|D_zf\| \leq \sigma_2$ for any $z\in M$. Alice will make choices in modified Schmidt games defined in Subsection 2.3 according to the following lemma:
\begin{lemma}\label{avoid}(cf. Lemma 3.1 in \cite{Wu2})
Let $a\in \mathbb{N}_+$ be such that
\begin{equation}\label{e:a}
a > \frac{\log 13}{\log\sigma_1}.
\end{equation}
Then there exists $0<\eta<\frac{1}{4}$ such that if $(z,n) \in \Omega$, and $B_1, \cdots, B_N$ are $N$ subsets in $D_0$ with $\text{diam}(B_i) < \frac{\epsilon}{2\sigma_2^{n+a}}$, there exists $(z', n+a)\in \Omega$ such that
$$\psi(z',n+a) \subset \psi(z,n),$$
and
$$\psi(z',n+a)\cap B_i=\emptyset$$
for at least $\lceil \eta N \rceil$ (the smallest integer $\geq \eta N$) of the sets $B_i, \ 1\leq i \leq N$. \end{lemma}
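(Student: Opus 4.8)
The plan is to reduce the statement to a pigeonhole count carried out after pushing the configuration forward by $f^{n+a}$. By (MSG0) the family of all $(n+a)$-atoms contained in $\psi(z,n)$ is nonempty; list them as $A_1,\dots,A_m$, so each $A_j=\psi(z_j',n+a)$ with $(z_j',n+a)\le(z,n)$, and note that they are essentially disjoint and (being a refinement of the $n$-tiling) exactly cover $\psi(z,n)$. I would prove two estimates: a lower bound on their number $m$, and a uniform upper bound $t$ on the number of them that a single $B_i$ can meet. With both in hand, the double count
\[
\sum_{j=1}^{m}\#\{i:A_j\cap B_i\neq\emptyset\}=\sum_{i=1}^{N}\#\{j:A_j\cap B_i\neq\emptyset\}\le tN
\]
shows some $A_{j^{*}}$ meets at most $tN/m$ of the sets $B_i$, hence is disjoint from at least $N(1-t/m)$ of them; taking $z'=z_{j^{*}}'$ then proves the lemma with any $\eta\le 1-t/m$.

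For the lower bound on $m$ I would combine the defining inclusions of Definition \ref{tiling} with the Jacobian bounded distortion Lemma \ref{bd}. Since $f^{n+a}(A_j)$ and $f^{n}(\psi(z,n))$ are each comparable to an $\epsilon$-ball, the change-of-variables formula and the chain rule give $\nu(A_j)/\nu(\psi(z,n))\approx 1/\text{Jac}(f^{a})(f^{n}z_j')\le K/\sigma_1^{ad}$, where $\text{Jac}(f^{a})\ge\sigma_1^{ad}$ because $f$ expands by at least $\sigma_1$ in every direction and $K\to 1$ as $\epsilon\to 0$. Summing the exact identity $\sum_j\nu(A_j)=\nu(\psi(z,n))$ yields $m\ge\sigma_1^{ad}/K$, and the hypothesis $a>\log 13/\log\sigma_1$, i.e.\ $\sigma_1^{a}>13$, makes $m$ large. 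This is precisely where $C^{1+\theta}$, through Lemma \ref{bd}, is indispensable.

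The crux, and the step I expect to be the main obstacle, is the uniform incidence bound $t$: in the non-conformal case the atoms $A_j$ may be highly eccentric and of very different sizes, so a naive "how many atoms can a small set meet" volume estimate blows up like $(\sigma_2/\sigma_1)^{a}$. I would avoid this entirely by again working through $f^{n+a}$. The hypothesis $\text{diam}(B_i)<\epsilon/(2\sigma_2^{n+a})$ gives $\text{diam}(f^{n+a}(B_i))\le\epsilon/2$, so $f^{n+a}(B_i)$ meets only boundedly many of the essentially disjoint level-$1$ cells $f^{n+a}(A_j)$, each of which contains an $\epsilon/2$-ball; by the power-law property of $\nu$ (recalled at the start of Section 3) their number is at most a dimension constant, say $(c_2/c_1)5^{d}$. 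It then suffices to see that at most one atom meeting $B_i$ can map onto a given level-$1$ cell $Q$. Indeed, if $w_1\in A_1\cap B_i$ and $w_2\in A_2\cap B_i$ with $f^{n+a}(A_1)=f^{n+a}(A_2)=Q$, then $d(f^{i}w_1,f^{i}w_2)\le\sigma_2^{i}\,\text{diam}(B_i)\le\epsilon/2<\rho_0$ for all $0\le i\le n+a$ (with $\rho_0$ the injectivity scale of $f$); since $f$ is injective on every $\rho_0$-ball, the very same composition of inverse branches reconstructs $w_1$ and $w_2$ from $f^{n+a}w_1,f^{n+a}w_2\in Q$, forcing them into a single atom and hence $A_1=A_2$. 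Thus $t\le(c_2/c_1)5^{d}$, independent of $n,a,N$.

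Finally, with $t$ bounded by a dimension constant and $m\ge\sigma_1^{ad}/K>13^{d}/K$, the choice of $a$ forces $t/m$ strictly below $1$ (one gets $t/m\le C(5/13)^{d}$), so $1-t/m$ is a fixed positive number depending only on $f$ and $\epsilon$. Setting $\eta:=\min\{1-t/m,\tfrac15\}\in(0,\tfrac14)$ then gives a uniform constant for which the atom $A_{j^{*}}$ above is disjoint from at least $\lceil\eta N\rceil$ of the $B_i$. The only points needing extra care beyond this outline are the essentially-disjoint-cover bookkeeping for the atoms (so that $\sum_j\nu(A_j)=\nu(\psi(z,n))$ holds exactly) and checking that $\epsilon$ is small enough that $\epsilon/2<\rho_0$ and that Lemma \ref{bd} applies at the relevant scale.
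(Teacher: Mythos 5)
The paper itself offers no proof of this lemma; it is imported wholesale from Lemma 3.1 of \cite{Wu2}, so your argument has to stand on its own. Its overall shape (pigeonhole over the level-$(n+a)$ atoms inside $\psi(z,n)$, with incidences controlled at the $\epsilon$-scale after applying $f^{n+a}$) is reasonable, but it contains two genuine gaps, the first of which is serious.

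The serious one is your opening claim that the $(n+a)$-atoms contained in $\psi(z,n)$ are essentially disjoint and \emph{exactly} cover it, so that $\sum_j\nu(A_j)=\nu(\psi(z,n))$. Nothing in Definition \ref{tiling} makes $\mathcal{T}_{n+a}$ a refinement of $\mathcal{T}_n$, and for the tilings of Example \ref{exampletiling} it is not one: an $(n+a)$-atom is a component of $f^{-(n+a-1)}(D_1(z))$, whose $f^{n-1}$-image is a component of $f^{-a}(D_1(z))$ and can straddle several of the Voronoi cells $D_1(w)$, so the atom is contained in no single $n$-atom. This is exactly why the paper formulates the measure estimate as property ($\nu 2$) rather than as an exact cover, and why (MSG0) only guarantees the existence of \emph{one} nested sub-atom. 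Without the exact cover, your lower bound $m\ge\sigma_1^{ad}/K$ collapses. One can salvage a partial cover: every $(n+a)$-atom meeting $g(B(w,\epsilon/2-2\epsilon/\sigma_1^{a}))$, where $w=f^{n}(x_n^i)$ and $g$ is the inverse branch of $f^{n}$ on $D_n^i$, does lie in $\psi(z,n)$, by the same orbit-shadowing argument you use later. But this costs a factor of order $(9/26)^d$ in measure, and the resulting comparison of $m$ with $t$ only closes for $\sigma_1^{a}$ somewhat larger than $13$ unless the power-law and distortion constants are tracked very delicately. The threshold $13$ is in fact tuned to a leaner argument that needs no volume count at all: pick three points of $B(w,9\epsilon/26)$ pairwise at distance at least $9\epsilon/26$ and take the atoms through their $g$-preimages; any two atoms meeting a common $B_i$ have $f^{n}$-images within $4\epsilon/\sigma_1^{a}+\epsilon/(2\sigma_2^{a})<9\epsilon/26$ of each other (this is where $\sigma_1^{a}>13=4.5\cdot 26/9$ enters), so each $B_i$ blocks at most one of the three candidates, and pigeonhole leaves one candidate avoiding at least $2N/3\ge\lceil N/5\rceil$ of the $B_i$.

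The second gap is in the incidence bound. The sets $f^{n+a}(A_j)$ are \emph{not} essentially disjoint: distinct atoms are distinct inverse branches, and their forward images typically coincide or overlap heavily, so ``meets only boundedly many of the essentially disjoint level-$1$ cells'' is not available, and your follow-up (``at most one atom per cell $Q$'') only treats atoms with \emph{identical} images, not atoms with distinct but overlapping ones. This gap is fillable with the tools you already invoke: if two atoms both meet a fixed $B_i$, join witness points by a short path; all forward images of the union up to time $n+a$ then have diameter below $5\epsilon<\rho_0$, so $f^{n+a}$ is injective on that union and the two images are genuinely disjoint, after which the power law does give $t\le(c_2/c_1)5^{d}$. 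But the disjointness must be proved for exactly these pairs rather than asserted for the whole family.
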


\begin{proof}[Proof of Theorem \ref{main3}]
Let $a>a_*$ satisfy \eqref{e:a} and let $b>a_*$ be arbitrary. Let $r\in \mathbb{N}$ be large enough with
$$(1-\eta)^r(a+b)r<1.$$
Fix $L>0$ to be very small such that $f|_{B(z,L)}$ is injective for any $z\in M$. Regardless of the initial moves of Bob, Alice can make arbitrary moves waiting until Bob chooses an atom $\psi(z_1, n_1)$ with $n_1$ large enough such that
\begin{equation}\label{e:n1}
\frac{2\epsilon}{\sigma_1^{n_1-(a+b)r}}\leq \frac{L}{100}.
\end{equation}
Let $c>0$ be small enough such that $c \leq \frac{\epsilon}{2\sigma_2^{n_1+(a+b)r+a}}$. Note that $f^{-k}(B(y,c))$ consists of finitely many connected components. If $f^k(z)\in B(y,c)$ we let $I_k(z,c)$ be the connected component which contains $z$. Let $I_k(c)$ mean any one of the connected components of $f^{-k}(B(y,c))$.

Now we describe a strategy for Alice to win the $(a, b)$-modified Schmidt games induced by $f$ and played on $M$ with target set $S=E(f, y)$. We claim that for each $j\in \mathbb{N}$, Alice can ensure that for any $x\in \psi(\omega'_{r(j+1)})$ and any $I_k(c)$ with $k< (j+1)r(a+b)$, she has $x \notin I_k(c)$. This will imply $\cap_i\psi(\omega'_i) \subset (\bigcup_{k} I_k(c))^c \subset E(f, y)$ and finish the proof.

Let us prove the claim by induction on $j$. Note that each step consists of $r$ turns of play. Consider $j=0$. Suppose that Bob has chosen an atom $\psi(z_1,n_1)$. So Alice needs to avoid all $I_k(c)$'s with $0\leq k< (a+b)r$ in the next $r$ turns of play. For a given $0\leq k< (a+b)r$, we have
$$\text{diam}(f^k(\psi(z_1,n_1)))\leq \frac{2\epsilon}{\sigma_1^{n_1-k}} \leq\frac{2\epsilon}{\sigma_1^{n_1-(a+b)r}}\leq \frac{L}{100}$$
by \eqref{e:n1}. This implies that $\psi(z_1,n_1) \cap f^{-k}(B(y,c))$ has at most one connected component by the choice of $c$ and $L$. In other words, for each $0\leq k< (a+b)r$ there is at most one $I_k(c)$ intersecting with $\psi(z_1,n_1)$. So there are at most $(a+b)r$ many $I_k(c)$'s intersecting with $\psi(z_1,n_1)$. For each of them one has
$$\text{diam}(I_k(c)) \leq \frac{c}{\sigma_1^k} \leq c \leq \frac{\epsilon}{2\sigma_2^{n_1+(a+b)r+a}}$$
by the choice of $c$. This guarantees that Alice can apply Lemma \ref{avoid} in each of the $r$ turns of play to avoid some of these $I_k(c)$'s. Since $r(a+b)(1-\eta)^r<1$, after $r$ turns of play Alice can avoid all these $I_k(c)$'s. So the claim is true when $j=0$.

Assume the claim is true for $0, 1, \cdots, j-1$. Now we consider the $j$th step. Suppose that Bob already picked $\psi(\omega_{jr+1})$. In this step Alice only needs to avoid the $I_k(c)$'s satisfying
\begin{equation}\label{e:ks}
jr(a+b) \leq k < (j+1)r(a+b)
\end{equation}
and
\begin{equation} \label{e:intersect}
I_k(c) \cap \psi(\omega_{jr+1})  \neq \emptyset.
\end{equation}
For a given $k$ with \eqref{e:ks},
\begin{equation*}
\begin{aligned}
\text{diam}(f^k(\psi(\omega_{jr+1}))) &= \text{diam}(f^k(\overline{D_{n_1+jr(a+b)}^i}))\\
&\leq \frac{2\epsilon}{\sigma_1^{n_1+jr(a+b)-k}} \\
&\leq\frac{2\epsilon}{\sigma_1^{n_1-(a+b)r}}\\
&\leq \frac{L}{100}
\end{aligned}
\end{equation*}
by \eqref{e:n1} and \eqref{e:ks}. This implies that $\psi(\omega_{jr+1}) \cap f^{-k}(B(y,c))$ has at most one connected component by the choice of $c$ and $L$. So there are at most $r(a+b)$ many $I_k(c)$'s satisfying \eqref{e:ks} and \eqref{e:intersect}.

We have at most  $r(a+b)$ many $f^{jr(a+b)}(I_k(c))$'s intersecting $f^{jr(a+b)}(\psi(\omega_{jr+1}))$ by applying $f^{jr(a+b)}$. As $0\leq k-jr(a+b)< r(a+b)$ and $f^{jr(a+b)}(\psi(\omega_{jr+1}))$ is an $n_1$-atom, we can use the same argument as in the case $j=0$ for Alice to get a choice in the picture of $f^{jr(a+b)}(\psi(\omega_{jr+1}))$ first, and then applying $f^{-jr(a+b)}$ back to get a choice in the picture of $\psi(\omega_{jr+1})$ in each turn of play. After $r$ turns of play Alice can avoid all these $I_k(c)$'s satisfying \eqref{e:ks} and \eqref{e:intersect}. So the claim is true and the theorem follows.
\end{proof}

By (2) of Proposition \ref{winproperty}, namely the stability of winning property under countable intersections, one has:
\begin{corollary}\label{coro3}
Let $Y$ be a countable subset of $M$. Then $E(f,Y)$ is winning for modified Schmidt games induced by $f$ and played on $M$.
\end{corollary}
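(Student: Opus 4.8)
The plan is to realize $E(f,Y)$ as a countable intersection of the single-point sets $E(f,y)$ already handled by Theorem \ref{main3}, and then invoke the stability of the winning property under countable intersections. First I would unwind the definitions: a point $z$ lies in $E(f,Y)$ exactly when $Y \cap \overline{\{f^k(z), k\in\mathbb{N}\}} = \emptyset$, which holds if and only if $y \notin \overline{\{f^k(z), k\in\mathbb{N}\}}$ for every $y\in Y$, that is, $z\in E(f,y)$ for all $y\in Y$. Enumerating the countable set as $Y=\{y_i\}_{i}$ (finite or countably infinite), this yields
$$E(f,Y)=\bigcap_{i}E(f,y_i).$$

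Next I would apply Theorem \ref{main3} to each factor. The theorem guarantees that every $E(f,y_i)$ is winning for the modified Schmidt games induced by $f$; inspecting its proof, $E(f,y_i)$ is in fact $a$-winning for any $a>a_*$ satisfying \eqref{e:a}. The decisive observation is that this range of admissible $a$ depends only on $f$, through $\sigma_1$ in \eqref{e:a}, and not on the chosen base point $y_i$. Consequently one may fix a single such $a$ and conclude that \emph{every} $E(f,y_i)$ is $a$-winning for this one common value.

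Finally, I would appeal to part (2) of Proposition \ref{winproperty}, which asserts that the intersection of countably many $a$-winning sets is again $a$-winning. Applying it to the family $\{E(f,y_i)\}_{i}$ shows that $\bigcap_i E(f,y_i)=E(f,Y)$ is $a$-winning, and therefore winning for the modified Schmidt games induced by $f$ and played on $M$.

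The only genuine subtlety, and thus the point I would emphasize, is the \emph{uniformity} of the winning parameter $a$ across all base points $y\in M$: the countable-intersection mechanism of Proposition \ref{winproperty} requires a \emph{common} parameter, and it is precisely because $a$ is dictated by $f$ alone (via \eqref{e:a}) and not by $y$ that the argument closes. This stands in deliberate contrast to Corollary \ref{corollary}, where the winning parameter $\alpha$ depends on the Anosov diffeomorphism $f$ itself, which is exactly what obstructs the analogous extension to a countable family of distinct maps. Here, since $f$ is held fixed and only $Y$ varies, no such obstruction arises and the corollary follows immediately.
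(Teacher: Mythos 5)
Your proof is correct and follows essentially the same route as the paper, which simply writes $E(f,Y)=\bigcap_{y\in Y}E(f,y)$ and invokes part (2) of Proposition \ref{winproperty} together with Theorem \ref{main3}. Your explicit check that the parameter $a$ from \eqref{e:a} depends only on $f$ (via $\sigma_1$ and $a_*$) and not on the base point $y$ is the right point to verify, and it is exactly what makes the countable-intersection step legitimate.
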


Let $y \in M$, and $\{f_i\}_{i=1}^N$ ($N \geq 2$) be a finite set of $C^{1+\theta}$-expanding endomorphisms on $M$. Since our definition of modified Schmidt games depends on the expanding endomorphism itself, it is nonsense to say whether $\cap_{i=1}^N E(f_i,y)$ is winning for modified Schmidt games. It is not known whether $\cap_{i=1}^N E(f_i,y)$ has full Hausdorff dimension on $M$.

\ \
\\[-2mm]
\textbf{Acknowledgement.} The author would like to thank Federico Rodriguez Hertz, Jinpeng An and Lifan Guan for valuable comments. This research is supported by CPSF (\#2015T80010).

\end{document}